\theoremstyle{plain}
\newtheorem{corollary}{Corollary}[section]
\newtheorem{definition}{Definition}[section]
\newtheorem{lemma}{Lemma}[section]
\newtheorem{proposition}{Proposition}[section]
\newtheorem{remark}{Remark}[section]
\newtheorem{theorem}{Theorem}[section]
\numberwithin{equation}{section}
\title[Bond Percolation on three fractals]{Bond percolation on a non-p.c.f. Sierpi\'nski Gasket, iterated barycentric subdivision of a triangle, and Hexacarpet}
\author[D.~Lougee]{Derek Lougee}
\address[D.~Lougee]{Department of Mathematics, Cornell University, Ithaca, NY, 14853-4201\\ Mailing address: Department of Economicc, University of Carlos III, Madrid Spain}
\author[B.~Steinhurst]{Benjamin Steinhurst}
\address[B.~Steinhurst]{Department of Mathematics, Cornell University, Ithaca, NY, 14853-4201\newline Mailing address: Department of Mathematics and Computer Science, McDaniel College, Westminster MD, 21157}
\email[B.~Steinhurst]{bsteinhurst@mcdaniel.edu}
\date{\today}
\begin{document}

\begin{abstract}
We investigate bond percolation on the iterated barycentric subdivision of a triangle, the hexacarpet, and the non-p.c.f. Sierpinski gasket. With the use of known results on the diamond fractal, we are able to bound the critical probability of bond percolation on the non-p.c.f. gasket and the iterated barycentric subdivision of a triangle from above by 0.282. We then show how both the gasket and hexacarpet fractals are related via the iterated barycentric subdivisions of a triangle: the two spaces exhibit duality properties although they are not themselves dual graphs. Finally we show the existence of a non-trivial phase transition on all three graphs.

MSC2012: 60K35, 28A80, 52M20

Keywords: Percolation, barycentric subdivision, fractals.
\end{abstract}

\maketitle

\section{Introduction}\label{sec:intro}


A bond percolation cluster in an ambient graph is a random subgraph whose edges are chosen from the ambient graph's edges independently with some probability $p$. The critical probability of bond percolation is the supremum of those $p$ such that the largest component of the cluster is almost surely finite, or conversely the infemum of the $p$ so that the largest cluster is almost surely infinite. The study of percolation began with Broadbent and Hammersley in 1957 \cite{Hammersley} with a major contribution coming from Kesten in 1980 \cite{Kesten} who proved rigorously that the previously calculated value of the critical probability of bond percolation on $\mathbb{Z}^{2}$ was correct. A more detailed history and overview of the topic can be found in Grimmett \cite{Grimmett-PercBook}. The original idea was to use percolation as a model for the formation of random clusters in a homogenous medium, usually $\mathbb{Z}^{d}$, for example the Ising model where an electron with either up or down spin is placed at each vertex of the graph.


As ambient graphs, lattices and Cayley graphs of finitely generated groups have proven to be quite tractable for proving the existence and upper bounds on the critical probability of bond percolation. It was a natural progression to consider the graph approximations to common fractals like the Sierpinski gasket \cite{Shinoda-preSG} and Sierpinski carpets \cite{Kum1997,Shinoda-carpets,Shinoda-existence}. The main observation in these studies was that the graphs representing the gasket and the carpet could be viewed as subgraphs of $\mathbb{Z}^{2}$ so many methods could be adapted. The difference between the gasket and carpet cases is that for the gasket the critical probability is trivially equal to one. For the carpet case the results are not-trivial. A departure from this was the work in \cite{HamblyKumagai} on the diamond fractal whose graph approximations have unbounded vertex degree cannot all be subgraphs of $\mathbb{Z}^{d}$ for any fixed $d$. The non-post-critically finite (non-p.c.f.) Sierpinski gasket, like the diamond fractal, does not have approximating graphs that can be embedded in a single finite dimensional lattice but as we shall see bears enough resemblance to the diamond to be amenable to the same methods as in \cite{HamblyKumagai}.


A self-similar fractal has a geometric structure that allows it to be described as being the union of scaled copies of itself, with contraction mappings $\phi_i$ mapping the fractal into itself. For example, the unit interval is the union of $[0,\frac12]$ and $[\frac12,1]$ with mappings $\pi_0(x) = \frac{1}{2}x$ and $\phi_1(x) = \frac{1}{2}x+\frac{1}{2}$. A cell structure can then be iteratively defined. The fractal $F$, is itself a level $0$ cell. Then $\phi_i(F)$ are the level one cells, and $\phi_i(\phi_j(F))$ the level two cells, and so on. We say that a fractal is finitely ramified if $F$ can be separated into disjoint components by removing a finite number of points from $F$. In our example, the unit interval is disconnected by removing the point $\{\frac{1}{2}\}$. For a finitely ramified fractal we can ask how many cells of level $n$ can overlap at a given point? In the example of the unit interval only two cells of a given level can ever overlap at a point. \emph{Post-critically finite} is a more delicate condition to state and the distinction is not critical to this paper so the interested reader can look to \cite{Kigami} for the precise definition. It will suffice to state that being not finitely ramified will imply being not post-critically finite. 
Consider Figure \ref{fig:nonpcf}, as each triangular face is replaced by a copy of the level one approximation to the non-p.c.f. Sierpinski gasket the number of points that must be removed to separate the succeeding approximations is growing without bound. So the non-p.c.f. Sierpinski gasket is not finitely ramified and so not post-critically finite.

\begin{theorem}\label{thm:main}
Let $S$ denote the non-p.c.f. Sierpinski gasket and let $H$ denote the hexacarpet. Then $0 < p_S < 0.282$ and $0.718 < p_H < 1$.
\end{theorem}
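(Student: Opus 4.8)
The plan is to prove the four inequalities of Theorem~\ref{thm:main} by combining three ingredients: a monotone subgraph comparison with the diamond fractal, an elementary first-moment estimate, and the combinatorial duality between the iterated barycentric subdivision $B$ of a triangle and the hexacarpet $H$ developed above. Write $S_n$, $B_n$, $H_n$ for the level-$n$ approximating graphs and recall two standard facts: bond percolation is monotone under adding edges, so $G'\subseteq G$ gives $p_c(G)\le p_c(G')$; and $p_c$ is unchanged by deleting finitely many vertices.

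First, for $p_S<0.282$ the structural point is to single out, inside the level-one generator of $S$, two ``pole'' vertices joined by four internally vertex-disjoint paths each of length two. Iterating the self-similar substitution then produces, on a distinguished pair of vertices of $S_n$, a copy of the level-$n$ graph of the diamond hierarchical lattice with branching $4$ and two bonds in series, realised as a subgraph of $S_n$ compatibly across the levels. Monotonicity gives $p_S\le p_D$, and by Hambly and Kumagai \cite{HamblyKumagai} we have $p_D<0.282$ (their analysis identifies $p_D$ as the nontrivial fixed point of a renormalisation map $p\mapsto 1-(1-p^2)^{4}$); hence $p_S<0.282$. The same argument applied to the generator of $B$ gives $p_B<0.282$, which is needed below. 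The lower bounds $p_S>0$ and $p_B>0$ follow from a first-moment argument: these graphs have only finitely many vertices of unbounded degree (the images of the three corners of the triangle), so after deleting them the expected number of open self-avoiding paths of length $k$ from a fixed vertex is at most $C(\lambda p)^k$ for a finite branching constant $\lambda$; thus for $p<1/\lambda$ that vertex's cluster is almost surely finite, giving $p_S\ge 1/\lambda>0$, and likewise $p_B>0$.

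The bounds on $p_H$ come from the duality between $B$ and $H$. Although $B_n$ and $H_n$ are not literally planar duals---the triangle has a boundary, so one outer face is missing---an open left-right crossing of $B_n$ at parameter $p$ obstructs a dual open top-bottom crossing of $H_n$ at parameter $1-p$, and conversely, up to a bounded boundary discrepancy that does not affect limiting critical values. Feeding in $0<p_B<0.282$ yields $p_H\ge 1-p_B>0.718$ and $p_H\le 1-p_B<1$, that is $0.718<p_H<1$. Together with the previous paragraph this establishes $0<p_S<0.282$ and $0.718<p_H<1$.

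I expect the decisive difficulty to be the start of the second paragraph: exhibiting the four disjoint length-two paths in the generator and, more delicately, checking that the diamond copies at successive levels nest, so that the limiting object really is (a subgraph of) the diamond fractal rather than a mere level-by-level resemblance. The boundary bookkeeping in the duality step is a secondary obstacle, to be resolved by sandwiching $H_n$ between the planar dual of $B_n$ and a bounded modification of it, which share the same limiting $p_c$.
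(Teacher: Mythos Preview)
Your upper bound $p_S<0.282$ via a diamond subgraph and your use of the $B$--$H$ duality to obtain $p_H>0.718$ from $p_B<0.282$ are essentially the paper's arguments (Theorems~\ref{thm:pcT}, \ref{thm:pSup} and \ref{duality}); only the bookkeeping about which diamond $D(m,n)$ actually sits in the level-one generator needs adjusting, since four internally disjoint length-two paths between two poles do not appear until level two (cf.\ the proof of Theorem~\ref{thm:pSup}).

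The genuine gap is your first-moment argument for $p_S>0$ and $p_B>0$. It is not true that $S$ or $B$ (the paper's $T$) have only finitely many vertices of infinite degree: \emph{every} vertex has infinite degree in the limit. Indeed, a vertex that first appears at level $k$ lies in at least two level-$k$ faces; at each further subdivision the number of incident faces (and hence the degree) at least doubles. The paper notes this explicitly in the proof of Theorem~\ref{duality}. Consequently there is no finite branching constant $\lambda$ available after deleting any finite set of vertices, and the path-counting bound $\sum_k C(\lambda p)^k<\infty$ cannot be set up. Since your route to $p_H<1$ passes through $p_B>0$, that inequality is also left unproved by your outline.

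The paper circumvents this by reversing the order of the logic. It first proves $p_H<1$ directly using Kozma's criterion (Theorem~\ref{thm:kozma}): $H$ has uniformly bounded degree and admits a planar embedding with no vertex accumulation points, polynomial volume growth, and an isoperimetric inequality (established via the area/perimeter estimates for the dual triangulation in Section~\ref{sec:hex}). Then duality (Theorem~\ref{duality}) gives $p_T=1-p_H>0$, and finally the comparison $p_T\le 2p_S$ of Corollary~\ref{cor:SScomp} yields $p_S>0$. If you want to salvage your approach you would need an entirely different mechanism for the lower bound on $p_S$ and $p_B$---one that does not rely on any local finiteness of the primal graphs.
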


\begin{proof}
Denote by $T$ the iterated barycentric subdivision of the triangle and $p_G$ is the critical threshold probability for bond percolation on graph $G$. By Theorems \ref{thm:pcT} and \ref{thm:pSup} we have that both $p_S$ and $p_T$ are bounded above by $0.282$. From Corollary \ref{cor:pHup} we have that $p_H < 1$. Theorem \ref{duality} gives $1-p_H = p_T$ so $p_T>0$. Finally from Corollary \ref{cor:SScomp} we have that $p_T \le 2 p_S$ from which the lower bound for $p_S$ follows. 
\end{proof}

Sections \ref{sec:perc} and \ref{sec:diamond} introduce the bond percolation problem and diamond fractals, setting out the necessary notions and notation that will be used in the rest of the paper. Section \ref{sec:BSSG} defines the barycentric subdivision operation and shows that on the iterated barycentric subdivision of a triangle that the diamond fractal can be used to obtain the $0.282$ upper bound used in Theorem \ref{thm:main}. This argument is extended to the non-p.c.f. Sierpinski gasket and the section ends by comparing percolation on the barycentric subdivisions of a triangle and the non-p.c.f Sierpinski gasket. In Section \ref{sec:hex} the hexacarpet is introduced as the limit of the dual graphs of the barycentric subdivisions of a triangle and it is shown that the critical threshold probability for bond percolation on the hexacarpet is strictly less than one. Finally in Section \ref{sec:dual} the relationship between the critical thresholds on the hexacarpet and the iterated barycentric subdivision of a triangle is discussed.

\section*{Acknowledgements}
We first thank Luke Rogers for first suggesting that we look at the non-p.c.f. Sierpinski gasket, Takeshi Kumagai for suggesting the thinning procedure, and Joe Chen for his insight into the isoperimetric problem. We also thank Dan Kelleher and Alexander Teplyaev for bringing the hexacarpet to our attention. Benjamin Hambly make several useful comments himself and solicited useful comments from another reader. Finally the Cornell University Department of Mathematics for supporting both authors.

\section{Bond Percolation}\label{sec:perc}
The standard model for bond percolation is to have a fixed finite or infinite graph with its edge and vertex sets; in the case of fractals which are defined as scaling limits of a sequence of graphs the notion of bond percolation needs to be expanded. In \cite{Shinoda-preSG,Shinoda-existence,Shinoda-carpets} there are natural unbounded infinite graphs with no vertex accumulation points which are the increasing limit of finite graphs. Since bond percolation does not use the ``lengths'' of the edges and there is no graph theoretic difference between the compact limit of scaled graphs and the unbounded limit of unscaled graphs; this provides a good notion of percolation through such a fractal space. On the other hand Hambly and Kumagai in \cite{HamblyKumagai} consider the diamond fractal as a self-similarly arranged graph with infinite vertex degrees and adopt an iterative notion of percolation which will be discussed in detail in Section \ref{sec:diamond}. 

The basic reference for percolation theory used in this paper is \cite{Grimmett-PercBook}. Following the notation there we use a model of bond percolation that makes varying the parameter $p$ simple. Given a graph $G=(V,E)$ assign i.i.d. uniform random variables on $[0,1]$ to the edges in $E$. 
Let $\Omega_G = [0,1]^{E}$ and $\mathbb{P}_G = \prod_{e \in E} \lambda_{[0,1]}$, where $\lambda_{[0,1]}$ is Lebesgue measure on the $[0,1]$. Define $E_p = \{ e \in E :\ \omega(e) < p \}$, if $e \in E_p$ we say that $e$ is an open edge. Let $G_p = (V,E_p)$ be the random graph consisting of the original vertex set and the open edges. If the graph $G$ has a marked vertex called the origin the open connected component of the graph $G_p$ containing the origin is called $\mathcal{O}_{G_p}$. Site percolation is the complementary process where the vertex set is randomized rather than the edge set. 

\begin{definition}
The percolation probability is defined as
\begin{equation*}
	\theta_G(p) = \mathbb{P}_G(|\mathcal{O}_{G_p}| = \infty).
\end{equation*}
Where $|\mathcal{O}_{G_p}|$ is the number of vertices in $\mathcal{O}_{G_p}$.
\end{definition}

We will, when context allows, drop the dependence of $\mathbb{P}_G$ on $G$ from the notation and instead reference it in $\theta_G$ which will always be understood to depend on the relevant measure for the underlying graph. 

The existence of an infinite open cluster not necessarily containing the origin is actually a $0-1$ event. In fact, an infinite open cluster exists with probability one if and only if there is a positive probability that the origin is in an infinite cluster \cite{Grimmett-PercBook}. 

\begin{remark}
If $G = \mathbb{Z}^{d}$, the event $\{|\mathcal{O}_{G_p}| = \infty\}$ can be interpreted as `there exists an open cluster that ``crosses'' from the origin in $\mathbb{Z}^{d}$ to infinity with positive probability.' This observation is the basis for the percolation model discussed in the next section.
\end{remark}

\begin{remark}\label{rem:mono1}
The function $\theta_G(p)$ is nondecreasing in $p$. If the environment is sampled, putting i.i.d. uniform labels on each edge, then by choosing those edges with $\omega(e) < p + \epsilon$ instead of $\omega(e)<p$ to be open then $|\mathcal{O}_{G_p}| = \infty$ implies that $\mathcal{O}_{G_{p+\epsilon}}$ has the same edges as $\mathcal{O}_{G_p}$ and potentially more so the connected component containing the origin is infinite. For a similar reason $\theta_{G}(p)$ is nondecreasing in the graph $G$. That is given a subgraph $H \subset G$, $\theta_{H}(p) \le \theta_{G}(p)$. 
\end{remark}

\begin{lemma}\label{lem:mono2}
If $H$ is a graph obtained from $G$ by declaring an equivalence relation on the vertices of $G$ such that each equivalence class contains a uniformly bounded finite number of vertices and taking the quotient of $G$ modulo this relation, then $\theta_G(p) \le \theta_H(p)$. 
\end{lemma}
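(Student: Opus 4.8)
The plan is to prove this by coupling the two percolation processes through a single edge environment. Write $\pi : V \to V/\!\sim$ for the quotient map on vertices, and realize $H$ as the multigraph whose edge set is literally $E$, with an edge $e=\{u,v\}$ of $G$ reinterpreted as the edge $\{\pi(u),\pi(v)\}$ of $H$ (edges joining two vertices of the same class become loops, which do no harm). With this convention the identity on $E$ is a bijection between the edge set of $G$ and the edge set of $H$, so $\Omega_H=\Omega_G=[0,1]^E$ and $\mathbb{P}_H=\mathbb{P}_G$; a single sample $\omega$ then determines $G_p$ and $H_p$ simultaneously, and $e\in E_p$ means precisely that $e$ is open in both graphs.

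Next I would check that $\pi$ maps open clusters of $G_p$ into open clusters of $H_p$. If $x_0,x_1,\dots,x_n$ is an open path in $G_p$, then $\pi(x_0),\pi(x_1),\dots,\pi(x_n)$ is an open walk in $H_p$, since each consecutive pair is joined by the image of an open edge; hence $\pi(x_0)$ and $\pi(x_n)$ lie in the same open cluster of $H_p$. Taking the origin of $H$ to be $\pi(o)$, where $o$ is the origin of $G$, this gives $\pi(\mathcal{O}_{G_p})\subseteq\mathcal{O}_{H_p}$ for every sample $\omega$.

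The last step, and the only place the hypothesis enters, is to transfer infiniteness from the cluster upstairs to the cluster downstairs. If every equivalence class has at most $M$ elements for a fixed $M<\infty$, then $\pi$ is at most $M$-to-one, so $|\mathcal{O}_{H_p}| \ge |\pi(\mathcal{O}_{G_p})| \ge |\mathcal{O}_{G_p}|/M$. Therefore $\{|\mathcal{O}_{G_p}|=\infty\}\subseteq\{|\mathcal{O}_{H_p}|=\infty\}$ pointwise under the coupling, and taking probabilities yields $\theta_G(p)\le\theta_H(p)$. This is the same flavour of monotonicity already recorded in Remark~\ref{rem:mono1}, now for the operation of identifying boundedly many vertices.

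I expect the only real subtlety to be bookkeeping around the quotient: one must take $H$ to be the multigraph, keeping distinct edges of $G$ distinct in $H$, rather than its simplification. If parallel edges were collapsed, the indicator that the collapsed edge is open would be a maximum of several independent indicators, so the law on the edge set of $H$ would no longer be the standard product percolation measure $\mathbb{P}_H$, and the clean equality of measures used above would fail. With the multigraph convention the coupling is exact and the argument goes through verbatim, and the uniform bound $M$ on the class sizes is exactly what prevents an infinite open cluster from being crushed to a finite one.
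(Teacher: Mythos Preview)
Your argument is correct and follows the same route as the paper's own proof: identify the edge sets of $G$ and $H$ so that $\mathbb{P}_H=\mathbb{P}_G$, observe that the quotient map sends open connected sets to open connected sets, and use the finiteness of the equivalence classes to ensure an infinite cluster in $G_p$ projects to an infinite cluster in $H_p$. Your write-up is in fact somewhat more careful than the paper's, in that you make the multigraph convention explicit and flag what would go wrong if parallel edges were collapsed.
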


\begin{proof}
Suppose $G_p$ had an infinite component containing the origin. Then the vertices of this component would have to be shared between infinitely many equivalence classes since each class contains only finitely many vertices. Furthermore, the connected component would stay connected through this construction. The edge sets of $H$ and $G$ are the same so $\mathbb{P}_H = \mathbb{P}_G$ so the events are comparable. A strict inequality is possible since if the origin were not in an infinite component of $G_p$ then the quotient might connect the origin to an infinite component.
\end{proof}

\begin{definition}
The critical probability of percolation on a given graph $G$ is given by 
\begin{equation*}
	p_G = \sup\{p :\ \theta_G(p) = 0\}.
\end{equation*}
Typically the critical probability is denoted $p_c$ when there is only a single ambient graph under consideration. With this convention in mind, in the next section a family of critical probabilities will be denoted $p_c(m,n)$ where the $m$ and $n$ indicate the ambient graph. 
\end{definition}

We can say there is a phase transition from a subcritical regime where open clusters are almost surely finite to a supercritical regime where there is almost surely an infinite cluster if $p_G \in (0,1)$. That is when $p_G$ is not zero or one. In the course of this paper we will consider several graphs each with its own critical probability which will need to be compared; to reduce confusion we will always indicate which graph is being considered in the notation.

\section{The Diamond Fractal}\label{sec:diamond}

\begin{figure}[t]
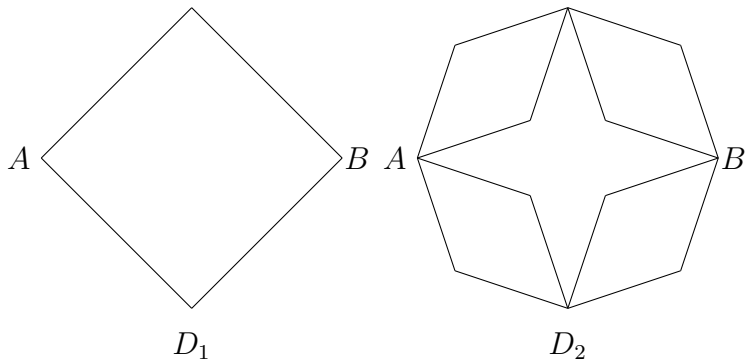

\begin{pgfpicture}{-0.3cm}{-0.5cm}{9.2cm}{4cm}
\pgfxyline(0,2)(2,4)
\pgfxyline(0,2)(2,0)
\pgfxyline(2,4)(4,2)
\pgfxyline(2,0)(4,2)
\pgfputat{\pgfxy(-0.3,2)}{\pgfbox[center,center]{$A$}}
\pgfputat{\pgfxy(4.2,2)}{\pgfbox[center,center]{$B$}}

\pgfxyline(5,2)(5.5,3.5)
\pgfxyline(5,2)(6.5,2.5)
\pgfxyline(5.5,3.5)(7,4)
\pgfxyline(6.5,2.5)(7,4)
\pgfxyline(5,2)(5.5,0.5)
\pgfxyline(5,2)(6.5,1.5)
\pgfxyline(5.5,0.5)(7,0)
\pgfxyline(6.5,1.5)(7,0)
\pgfxyline(7,4)(8.5,3.5)
\pgfxyline(7,4)(7.5,2.5)
\pgfxyline(8.5,3.5)(9,2)
\pgfxyline(7.5,2.5)(9,2)
\pgfxyline(7,0)(7.5,1.5)
\pgfxyline(7,0)(8.5,0.5)
\pgfxyline(7.5,1.5)(9,2)
\pgfxyline(8.5,0.5)(9,2)
\pgfputat{\pgfxy(4.7,2)}{\pgfbox[center,center]{$A$}}
\pgfputat{\pgfxy(9.2,2)}{\pgfbox[center,center]{$B$}}

\pgfputat{\pgfxy(2,-0.5)}{\pgfbox[center,center]{$D_1$}}
\pgfputat{\pgfxy(7,-0.5)}{\pgfbox[center,center]{$D_2$}}
\end{pgfpicture}
\caption{\label{Diamond} First and Second Graph Approximations of $D(2,2)$}
\end{figure}

Percolation on the diamond fractal has been studied by Hambly and Kumagai \cite{HamblyKumagai}. In this section we will reproduce some of their results and state a formula for the critical probability of a diamond fractal that is mentioned briefly in their concluding discussion.

To construct the diamond fractal $D(m,n)$, begin with a line segment $(A,B)$ denoted $D_0(m,n)$ where $m,n \ge 2$ denote the number of branches and subdivisions respectively of each edge to be made. The first approximation of the diamond fractal, $D_1(m,n)$  is generated by replacing the one edge in $D_0(m,n)$ by $m$ non-intersecting paths of $n$ edges. Figure \ref{Diamond} shows the first and second approximations to $D(2,2)$. We shall mostly be interested in $D(2,2)$ and $D(4,2^{k})$. 

The notion of percolation on diamond fractals that we use here is that given the probability $p$ of each edge being open, what is the probability of connecting one end of the fractal to the other?  It is straight forward to conclude by self-similarity that this is the same as the probability of the open cluster crossing a cell of the fractal at some level. The key to this observation is that in the limit any cell of any level is the same as the whole fractal as infinite graphs. To access this crossing probability we take the limit of the crossing probabilities for all approximating graphs $D_l(m,n)$ and take their limit. Let $p_c(D_l(m,n))$ be the probability of there being an open cluster in $(D_l(m,n),E_p)$ which contains both endpoints. 
Then the crossing probability of the diamond with parameter $p$ is $\lim_{l \rightarrow \infty} p_{c}(D_l(m,n))$. 
The crossing probability is the notion of percolation that we shall use for the non-p.c.f. Sierpinski gasket. On the diamond fractal crossing is the same as connecting the components of the boundary (the extreme left and right hand vertices). We will see that for the non-p.c.f. Sierpinski gasket these two notions to not coincide.

\begin{proposition}{\cite{HamblyKumagai}}\label{prop:level1}
Given $p \in [0,1]$ the probability of not being able to cross $D_1(m,n)$ is 
\begin{equation*}
	\mathbb{P}(\text{no crossing}) = (1-p^{n})^{m},
\end{equation*}
where a crossing is the event that there exists an open cluster containing both $A$ and $B$.
\end{proposition}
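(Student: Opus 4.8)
The plan is to exploit the very simple combinatorial structure of $D_1(m,n)$: by construction it is the union of $m$ paths $P_1,\dots,P_m$, each a chain of $n$ edges joining $A$ to $B$, and these paths are pairwise non-intersecting except at the two shared endpoints $A$ and $B$. First I would record the structural observation that, since the $P_i$ share no internal vertices, deleting $A$ and $B$ disconnects $D_1(m,n)$ into the $m$ interiors of the $P_i$; consequently every path in $D_1(m,n)$ from $A$ to $B$ is contained in a single $P_i$ and therefore uses all $n$ of its edges. In particular, $A$ and $B$ lie in the same open cluster of $(D_1(m,n),E_p)$ if and only if there is some $i$ for which all $n$ edges of $P_i$ are open.

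Next I would compute the probability of the complementary event for a fixed $i$. Since the edge variables $\omega(e)$ are i.i.d. uniform on $[0,1]$, the event that all $n$ edges of $P_i$ are open has probability $p^{n}$, so the event $A_i$ that $P_i$ is \emph{not} entirely open has probability $1-p^{n}$. Because the paths $P_1,\dots,P_m$ have pairwise disjoint edge sets, the events $A_1,\dots,A_m$ depend on disjoint collections of independent coordinates of $\Omega_{D_1(m,n)}$ and are therefore mutually independent. Combining this with the first step,
\[
\mathbb{P}(\text{no crossing}) = \mathbb{P}\Big(\bigcap_{i=1}^{m} A_i\Big) = \prod_{i=1}^{m}\mathbb{P}(A_i) = (1-p^{n})^{m},
\]
which is the claimed formula.

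I do not expect any genuine obstacle: the only point requiring care is the first step, namely ruling out an $A$–$B$ connection that is not one of the designated paths $P_i$, and this is immediate from the non-intersecting hypothesis built into the definition of $D_1(m,n)$. Everything else reduces to the elementary independence computation above, and the same bookkeeping (independence of edge-disjoint sub-events) is exactly what will later be iterated across levels when passing to $\lim_{l\to\infty} p_c(D_l(m,n))$.
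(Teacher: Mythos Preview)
Your proposal is correct and follows essentially the same approach as the paper's proof: both argue that a crossing occurs if and only if some branch has all $n$ edges open, compute $1-p^n$ for a single branch, and multiply over the $m$ independent branches. Your version is simply more explicit about why an $A$--$B$ path must lie in a single $P_i$ and why the branch events are independent.
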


\begin{proof}
In order for there to be an open cluster containing $A$ and $B$ at least one of the $m$ branches must have all $n$ edges open. So each branch has a probability of not connecting $A$ and $B$ of $1-p^{n}$. For there to be no branch which is completely open all $m$ branches must not connect $A$ and $B$. Hence the probability of no crossing is $(1-p^{n})^{m}$. 
\end{proof}

\begin{proposition}\label{prop:fmn}
The functions $f_{m,n}(p) = 1 - (1-p^{n})^{m}$ for $m,n \ge 2$ each have exactly one stationary point in $(0,1)$ which is repelling. Both $0$ and $1$ are attracting fixed points. 
\end{proposition}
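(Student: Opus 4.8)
The plan is to treat $f:=f_{m,n}$ as a polynomial self-map of $[0,1]$ and study the zeros of $g(p):=f(p)-p$. First I would record the elementary facts: $f(0)=0$ and $f(1)=1$, so $0$ and $1$ are fixed points; and $f'(p)=mn\,p^{n-1}(1-p^{n})^{m-1}$, so since $m,n\ge 2$ we get $f'(0)=f'(1)=0<1$, which already shows that both $0$ and $1$ are attracting. It then remains to locate and classify the fixed points in the open interval $(0,1)$.

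For existence, observe that $g(0)=g(1)=0$ while $g'(0)=f'(0)-1=-1<0$ and $g'(1)=f'(1)-1=-1<0$; hence $g$ is strictly negative just to the right of $0$ and strictly positive just to the left of $1$, so by the intermediate value theorem $g$ has at least one zero $p^{\ast}\in(0,1)$. Once uniqueness is established this forces $g<0$ on $(0,p^{\ast})$ and $g>0$ on $(p^{\ast},1)$.

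The heart of the argument is a zero count for $g$ via Rolle's theorem, for which I need to understand $g''=f''$. A direct differentiation gives the factorization
\begin{equation*}
 f''(p)=mn\,p^{n-2}(1-p^{n})^{m-2}\bigl[(n-1)-(mn-1)p^{n}\bigr],
\end{equation*}
and on $(0,1)$ the only vanishing factor is the bracket, which has the single (simple) root $a:=\bigl(\tfrac{n-1}{mn-1}\bigr)^{1/n}\in(0,1)$, with $f''>0$ on $(0,a)$ and $f''<0$ on $(a,1)$. Now suppose $g$ had two or more zeros in $(0,1)$; together with the zeros at $0$ and $1$ that is at least four zeros of $g$ in $[0,1]$ (counted with multiplicity), so two applications of Rolle's theorem force $g''=f''$ to have at least two zeros in $(0,1)$, contradicting the previous sentence. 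Hence $g$ has exactly one zero $p^{\ast}$ in $(0,1)$. The same bookkeeping applied to the hypothetical case that $p^{\ast}$ is a multiple zero of $g$ (which would again manufacture extra zeros of $g''$) shows $p^{\ast}$ is a \emph{simple} zero, so $g'(p^{\ast})\neq 0$; combined with $g<0$ on $(0,p^{\ast})$ and $g>0$ on $(p^{\ast},1)$ this yields $g'(p^{\ast})>0$, i.e. $f'(p^{\ast})=1+g'(p^{\ast})>1$, so $p^{\ast}$ is repelling.

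I expect the only genuinely computational step to be the factorization of $f''$ together with the check that its bracket has exactly one root in $(0,1)$; the conceptual point to be careful about is ruling out a multiple fixed point, since that is precisely what upgrades the a priori inequality $|f'(p^{\ast})|\ge 1$ to the strict inequality $|f'(p^{\ast})|>1$ required for repulsion. Everything else reduces to sign-checking at the endpoints and Rolle's theorem.
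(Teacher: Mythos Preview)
Your argument is correct. The paper itself states this proposition without proof, treating it as an elementary fact (the surrounding results on the diamond fractal are attributed to Hambly and Kumagai), so there is no proof in the text to compare against. Your approach---counting zeros of $g=f-p$ via Rolle's theorem together with the explicit factorization of $f''$---is a clean and self-contained way to fill this gap; in particular, your separate treatment of the possibility that $p^\ast$ is a multiple root is exactly what is needed to upgrade to the strict inequality $f'(p^\ast)>1$ and hence to genuine repulsion.
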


\begin{theorem}{\cite{HamblyKumagai}}\label{thm:pcdiamond}
The critical probability of percolation on $D(m,n)$ is the stationary point of $f_{m,n}(p)$ in $(0,1)$. 
\end{theorem}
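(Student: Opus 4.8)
The plan is to identify the level-$l$ crossing probabilities as the forward orbit of $p$ under $f_{m,n}$, and then read off the limit from the dynamical picture supplied by Proposition~\ref{prop:fmn}. For $p\in[0,1]$ write $q_l(p)$ for the probability that $(D_l(m,n),E_p)$ contains an open cluster joining the two boundary vertices $A$ and $B$; thus $q_0(p)=p$ and, by Proposition~\ref{prop:level1}, $q_1(p)=f_{m,n}(p)$.

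The first step is to prove the recursion $q_{l+1}(p)=f_{m,n}(q_l(p))$, i.e.\ $q_l=f_{m,n}^{\circ l}$. View $D_{l+1}(m,n)$ as the graph obtained from $D_1(m,n)$ by replacing each of its $mn$ edges with its own copy of $D_l(m,n)$, the boundary vertices of the copy being identified with the endpoints of the replaced edge. These $mn$ copies are pairwise edge-disjoint and meet only at vertices of the $D_1$-skeleton, so the events ``the $i$-th copy is crossed'' are independent and each has probability $q_l(p)$; moreover $A$ and $B$ are joined in $D_{l+1}(m,n)$ if and only if they are joined in the $D_1$-skeleton when one declares a skeleton edge open exactly when its copy of $D_l(m,n)$ is crossed. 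Applying Proposition~\ref{prop:level1} to this induced level-one configuration (with edge parameter $q_l(p)$) yields $q_{l+1}(p)=f_{m,n}(q_l(p))$.

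The second step is to run the dynamics of $f_{m,n}$ on $[0,1]$. The map is continuous and strictly increasing with $f_{m,n}(0)=0$, $f_{m,n}(1)=1$, and, by Proposition~\ref{prop:fmn}, a single interior fixed point $p^{*}$ which is repelling. The function $p\mapsto f_{m,n}(p)-p$ has no zero in $(0,p^{*})$, hence a constant sign there; this sign must be negative, since otherwise every orbit started just below $p^{*}$ would increase monotonically to $p^{*}$, contradicting that $p^{*}$ is repelling. Thus $f_{m,n}(p)<p$ on $(0,p^{*})$, and by monotonicity the orbit of any $p\in(0,p^{*})$ is decreasing and stays in $(0,p^{*})$, so it converges to the unique fixed point lying in $[0,p^{*})$, namely $0$. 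Symmetrically $f_{m,n}(p)>p$ on $(p^{*},1)$, so the orbit of any $p\in(p^{*},1)$ increases to $1$; and the orbit of $p^{*}$ itself is constant. Consequently the crossing probability $\pi(p):=\lim_{l\to\infty}q_l(p)$ is $0$ for $p<p^{*}$, is $p^{*}>0$ for $p=p^{*}$, and is $1$ for $p>p^{*}$.

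Finally, since on the diamond the existence of an infinite open cluster from the origin is the same event as crossing, we have $\theta_{D(m,n)}(p)=\pi(p)$, which vanishes precisely on $[0,p^{*})$; therefore $p_{D(m,n)}=\sup\{p:\theta_{D(m,n)}(p)=0\}=p^{*}$, the stationary point of $f_{m,n}$ in $(0,1)$. I expect the recursion step to be the one requiring the most care: one must check that conditioning on which sub-diamonds are crossed reduces $D_{l+1}(m,n)$ exactly to a level-one configuration with i.i.d.\ open edges. This is routine once the sub-diamonds are recognized as edge-disjoint blocks glued only along skeleton vertices, and everything afterward is the elementary one-dimensional dynamics guaranteed by Proposition~\ref{prop:fmn}.
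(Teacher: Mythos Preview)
Your argument is correct and follows the same route as the paper: identify the level-$l$ crossing probability with $f_{m,n}^{\circ l}(p)$ via the self-similar block decomposition, then read off the limit from the one-dimensional dynamics of $f_{m,n}$ given by Proposition~\ref{prop:fmn}. Your write-up supplies more detail (the edge-disjointness justifying independence, and the sign argument for $f_{m,n}(p)-p$) than the paper's brief proof; the only small imprecision is the final line equating the crossing event with $\{|\mathcal{O}|=\infty\}$, which the paper handles instead by \emph{defining} $p_c$ for the diamond via crossing probabilities.
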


\begin{proof}
From Proposition \ref{prop:level1} we know that the probability of crossing $D_1(m,n)$ with parameter $p$ is given by $f_{m,n}(p)$. When we next consider $D_2(m,n)$, each edge in $D_1(m,n)$ is replaced by a copy of $D_1(m,n)$ so the probability of crossing $D_2(m,n)$ is $f^{\circ 2}_{m,n}(p)$. Iterating we see that depending on the initial value of $p$, the probability of crossing $D_k(m,n)$ is $f^{\circ k}_{m,n}(p)$, which is limiting to the attracting stationary points of $f_{m,n}$, which are $0$ and $1$. So for $p < p_c(m,n)$, $f^{\circ k }_{m,n}(p)$ will limit to $0$, for $p>p_c(m,n)$ $f^{\circ k }_{m,n}(p)$ will limit to $1$, and for $p = p_c$, the limit will be $p_c>0$. So $p_c$ is the critical probability of percolation.
\end{proof}

For $m=n=4$ $p_c(4,4) \sim 0.282$. This is the first bound that is mentioned in proof of Theorem \ref{thm:main}.

\begin{proposition}\label{prop:pcmn}
Let $p_c(m,n)$ denote the critical probability of percolation on $D(m,n)$. Then for $m,n \ge 2$, $\frac{\partial p_c}{\partial m} < 0$ and $\frac{\partial p_c}{\partial n} > 0.$
\end{proposition}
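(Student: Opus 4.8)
The plan is to view $p_c(m,n)$ as an implicitly defined function and differentiate. By Theorem \ref{thm:pcdiamond}, $p_c(m,n)$ is the unique solution in $(0,1)$ of
\begin{equation*}
	F(p,m,n) := f_{m,n}(p) - p = 1-(1-p^{n})^{m} - p = 0,
\end{equation*}
and by Proposition \ref{prop:fmn} this solution is a repelling fixed point of the strictly increasing map $f_{m,n}$, so
\begin{equation*}
	\frac{\partial F}{\partial p}\bigg|_{p=p_c} = f_{m,n}'(p_c) - 1 > 0 .
\end{equation*}
In particular $\partial F/\partial p$ does not vanish at $p_c$, so the implicit function theorem applies: $p_c$ depends smoothly on $(m,n)$ (treated as real parameters $\ge 2$) and
\begin{equation*}
	\frac{\partial p_c}{\partial m} = -\frac{\partial F/\partial m}{\partial F/\partial p}\bigg|_{p=p_c}, \qquad
	\frac{\partial p_c}{\partial n} = -\frac{\partial F/\partial n}{\partial F/\partial p}\bigg|_{p=p_c}.
\end{equation*}
Since the common denominator is positive, it suffices to determine the signs of the numerators at $p=p_c\in(0,1)$.

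Next I would carry out the two short derivative computations. Differentiating $F$ in $m$ gives $\partial F/\partial m = -(1-p^{n})^{m}\ln(1-p^{n})$; for $p\in(0,1)$ we have $1-p^{n}\in(0,1)$, hence $\ln(1-p^{n})<0$ and $\partial F/\partial m>0$ at $p_c$, so $\partial p_c/\partial m<0$. Differentiating in $n$ gives $\partial F/\partial n = m(1-p^{n})^{m-1}p^{n}\ln p$; for $p\in(0,1)$ we have $\ln p<0$, hence $\partial F/\partial n<0$ at $p_c$, and therefore $\partial p_c/\partial n>0$. This gives both inequalities.

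There is no serious obstacle here beyond justifying that the implicit function theorem may be invoked, which is exactly the repelling assertion of Proposition \ref{prop:fmn}; without it $p_c$ could in principle fail to be differentiable in the parameters. If one prefers to avoid smoothness considerations entirely — or to conclude strict monotonicity directly for integer values of $m$ and $n$ — one can instead argue by graph comparison: for fixed $p\in(0,1)$ the quantity $f_{m,n}(p)$ is strictly increasing in $m$ and strictly decreasing in $n$, while for each fixed $(m,n)$ Proposition \ref{prop:fmn} gives $f_{m,n}(p)>p$ precisely for $p>p_c(m,n)$ and $f_{m,n}(p)<p$ precisely for $p<p_c(m,n)$. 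Evaluating the larger-$m$ map at $p=p_c(m,n)$ then forces its fixed point to lie strictly to the left of $p_c(m,n)$, and the analogous evaluation for $n$ forces it strictly to the right; this recovers the monotonicity claimed.
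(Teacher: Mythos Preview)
Your proof is correct and follows essentially the same approach as the paper: both define $g_{m,n}(p)=f_{m,n}(p)-p$, compute its partial derivatives in $m$ and $n$, and read off the sign of the induced change in the unique interior zero $p_c$. The only cosmetic difference is that you invoke the implicit function theorem explicitly (using the repelling property to get $\partial F/\partial p|_{p_c}>0$), whereas the paper argues directly from the sign of $g$ on $(0,p_c)$; your alternative graph-comparison argument at the end is a nice bonus the paper does not include.
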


\begin{proof}
Consider the function
\begin{equation*}
	g_{m,n}(p) =  f_{m,n}(p) -p = (1-p)-(1-p_c^{n})^{m}.
\end{equation*}
By Proposition \ref{prop:fmn}, $p_c$ is the unique zero of $g$ in $(0,1)$; also, $g_{m,n}(0) = 0$ and $g_{m,n}(1) = 0$. We will need three derivatives of $g_{m,n}$:
\begin{eqnarray}
	\frac{\partial g_{m,n}}{\partial p} &=& -1 + mnp^{n-1}(1-p^{n})^{m-1}\label{eq:gp}\\
	\frac{\partial g_{m,n}}{\partial m} &=& -n(1-p^{n})^{m}\ln(1-p^{n})np^{n-1}\label{eq:gm}\\
	\frac{\partial g_{m,n}}{\partial n} &=& -mp^{n}(1-p^{n})^{m-1}\ln(p).\label{eq:gn}
\end{eqnarray}
From (\ref{eq:gp}) we see that $\frac{\partial g_{m,n}}{\partial p}(0) < 0$ so on the interval $(0,p_c)$ $g_{m,n} <0$. Then by (\ref{eq:gm}) if $m$ is increasing so is $g_{m,n}(p)$ meaning that $p_c(m,n)$ is decreasing in $m$. Simiarly by (\ref{eq:gn}) $p_c(m,n)$ is increasing in $n$.
\end{proof}

This Proposition justifies the principle that to find the diamond with the lowest $p_c$ the goal is to find the optimal tradeoff between having the most branches with the fewest subdivisions. This principle will inform the bound in Theorem \ref{thm:main}.

\section{Barycentric Subdivisions and non-p.c.f. Sierpinski gasket}\label{sec:BSSG}

In this section we consider first the barycentric subdivision of a triangle and bond percolation on the limit of repeated barycentric subdivision (denoted $T$) in Theorem \ref{thm:pcT}. Then we define the non-p.c.f. Sierpinski gasket as it is presented in \cite{nonpcf} and extend the proof of Theorem \ref{thm:pcT} to non-p.c.f. Sierpinski gaskets ($S$) in Theorem \ref{thm:pSup}. The section ends with a comparison between the critical percolation probabilities on the $T$ and $S$ through an intermediate graph $\tilde{S}$ which has the same vertex set as $T$ but the edge set inherited from $S$. 

Throughout this section we will be concerned with estimating $p_G$ for the fractal limits of planar graphs. As in Section \ref{sec:diamond}, $p_G$ is interpreted as the supremum of values of $p$ such that the probability of crossing the $n^{th}$ level approximating graph tends to zero as $n$ increases. In Section \ref{sec:hex} when we consider the hexacarpet, the usual formulation of there being an infinite open cluster containing the origin is equivalent to the crossing probability formulation because the hexacarpet has uniformly bounded vertex degree and no vertex accumulation points.

\subsection{Barycentric Subdivision}\label{ssec:barycentric}

We now consider the iterated barycentric sudivision of a triangle. The purpose of this discussion is to show how this structure is related to the non-p.c.f. Sierpinski gasket. Throughout the following we will adopt some of the notation from \cite{Begue}.

\begin{figure}[t]
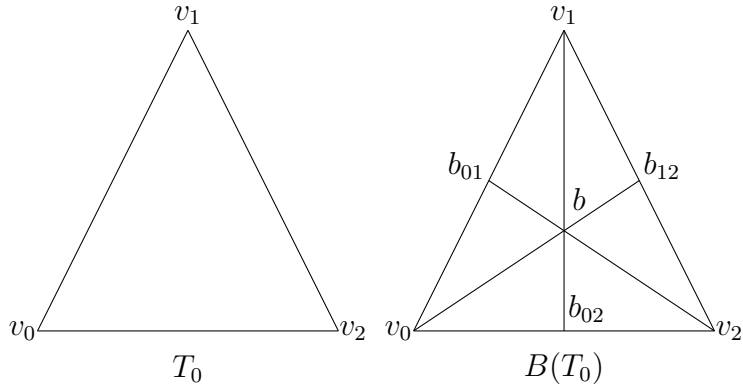

\begin{pgfpicture}{-0.5cm}{-0.5cm}{9.5cm}{4.5cm}
\pgfxyline(0,0)(2,4)
\pgfxyline(0,0)(4,0)
\pgfxyline(2,4)(4,0)

\pgfputat{\pgfxy(-0.2,0)}{\pgfbox[center,center]{$v_0$}}
\pgfputat{\pgfxy(2,4.2)}{\pgfbox[center,center]{$v_1$}}
\pgfputat{\pgfxy(4.2,0)}{\pgfbox[center,center]{$v_2$}}

\pgfxyline(5,0)(7,4)
\pgfxyline(5,0)(9,0)
\pgfxyline(7,4)(9,0)
\pgfxyline(6,2)(9,0)
\pgfxyline(7,0)(7,4)
\pgfxyline(5,0)(8,2)

\pgfputat{\pgfxy(4.8,0)}{\pgfbox[center,center]{$v_0$}}
\pgfputat{\pgfxy(7,4.2)}{\pgfbox[center,center]{$v_1$}}
\pgfputat{\pgfxy(9.2,0)}{\pgfbox[center,center]{$v_2$}}
\pgfputat{\pgfxy(5.7,2.2)}{\pgfbox[center,center]{$b_{01}$}}
\pgfputat{\pgfxy(8.3,2.2)}{\pgfbox[center,center]{$b_{12}$}}
\pgfputat{\pgfxy(7.3,0.3)}{\pgfbox[center,center]{$b_{02}$}}
\pgfputat{\pgfxy(7.2,1.75)}{\pgfbox[center,center]{$b$}}

\pgfputat{\pgfxy(2,-0.5)}{\pgfbox[center,center]{$T_0$}}
\pgfputat{\pgfxy(7,-0.5)}{\pgfbox[center,center]{$B(T_0)$}}

\end{pgfpicture}
\caption{\label{T0} Barycentric Subdivision of $T_0$}
\end{figure}

\begin{definition}
The graph $T_0$ is shown in Figure \ref{T0}. Let $b_{ij}, \ i,j = 0,1,2$ denote the midpoints of the edges of $T_0$ defined by $(v_i,v_j)$, and let $b = \frac{1}{3}(v_0+v_1+v_2)$ be the barycenter of $T_0$. The graph $T_1 = B(T_0)$, which is the barycentric subdivision of $T_0$, is also shown in Figure \ref{T0}. Since $T_1$ is a simplicial complex, it can itself be the object of a barycentric subdivision of each simplicial face.  Define $T_n$ as the result of barycentrically subdividing every face in $T_{n-1}$. Denote the limit as $n \rightarrow \infty$ by $T$. The limit is in the sense of an infinite planar graph.
\end{definition}

Notice that the vertex set of $T_1$ is the same as the vertex set of $S_1$ (see Figure \ref{fig:nonpcf}), the first approximation to the non-p.c.f. Sierpinski gasket, but there are no multi-edges. This is no longer true for $n \ge 2$, see Figure \ref{B2(T0)} for $B^{2}(T_0)$. This is the motivation for defining the graphs $\tilde{S}_n$ that are introduced below. The notion of barycentric subdivision is very well studied and we will refer to \cite{HatcherAT} for background.

\begin{figure}[t]
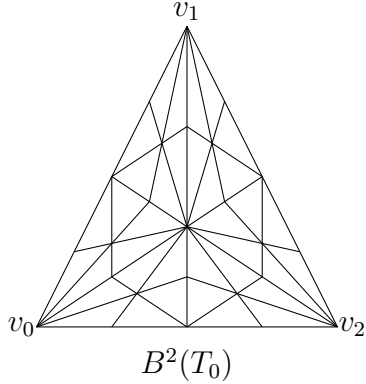

\begin{pgfpicture}{-0.5cm}{-0.5cm}{4.5cm}{4.5cm}
\pgfxyline(0,0)(2,4)
\pgfxyline(0,0)(4,0)
\pgfxyline(2,4)(4,0)
\pgfxyline(1,2)(4,0)
\pgfxyline(2,0)(2,4)
\pgfxyline(0,0)(3,2)

\pgfxyline(1,2)(1,0.666)
\pgfxyline(2,1.333)(0.5,1)
\pgfxyline(0,0)(1.5,1.666)

\pgfxyline(1.5,1.666)(2,4)
\pgfxyline(2,1.333)(1.5,3)
\pgfxyline(1,2)(2,2.666)

\pgfxyline(2,2.666)(3,2)
\pgfxyline(2,1.333)(2.5,3)
\pgfxyline(2,4)(2.5,1.666)

\pgfxyline(2.5,1.666)(4,0)
\pgfxyline(2,1.333)(3.5,1)
\pgfxyline(3,2)(3,0.666)

\pgfxyline(3,0.666)(2,0)
\pgfxyline(2,1.333)(3,0)
\pgfxyline(4,0)(2,0.666)

\pgfxyline(2,0.666)(0,0)
\pgfxyline(2,1.333)(1,0)
\pgfxyline(2,0)(1,0.666)

\pgfputat{\pgfxy(-0.2,0)}{\pgfbox[center,center]{$v_0$}}
\pgfputat{\pgfxy(2,4.2)}{\pgfbox[center,center]{$v_1$}}
\pgfputat{\pgfxy(4.2,0)}{\pgfbox[center,center]{$v_2$}}

\pgfputat{\pgfxy(2,-0.5)}{\pgfbox[center,center]{$B^2(T_0)$}}
\end{pgfpicture}
\caption{\label{B2(T0)} Repeated Barycentric Subdivision of $T_0$}
\end{figure}

\begin{remark}\label{rem:barysides}
Two properties of barycentric subdivision of triangles are going to be particularly useful. The first is that the area of each triangular face is $6^{-n}$ of the area of $T_0$ \cite{Diaconis}. The second is that the mesh size at level $n$ (the length of the longest edge in $B^{n}(T_0)$ is $\frac{2}{3}$ of the mesh size at level $n-1$ \cite{HatcherAT}. If $T_0$ is an equialateral triangle with side length one then the longest edge in the $B^{n}(T_0)$ is $(\frac{2}{3})^{n}$. 
\end{remark}

The following notation will be useful in the proof of Theorem \ref{thm:pcT}. Let $T_k = B^{k}(T_0)$. We label each of the six sub-triangles in $T_1$ in a clockwise direction: $0 = \{v_0,b_{01},b\}$, $1 = \{b_{01},v_1,b\}$, etc. For each sub triangle there are three vertices, three mid-points, and the barycenter, all but the barycenter might be shared with another face or more. We label each of these points recursively where $i(v_0)$ is the lower left vertex of $i$, $i(b)$ is the barycenter of $i$, and so forth. We label the sub-triangles in $T_2$ in a similar manner. For $i$, a sub-triangle of $T_1$, we have $i0 = \{i(v_0),i(b_{01}),i(b)\}$ and label the other five sub-triangles of $i$ in a clockwise manner. 

\begin{figure}[t]
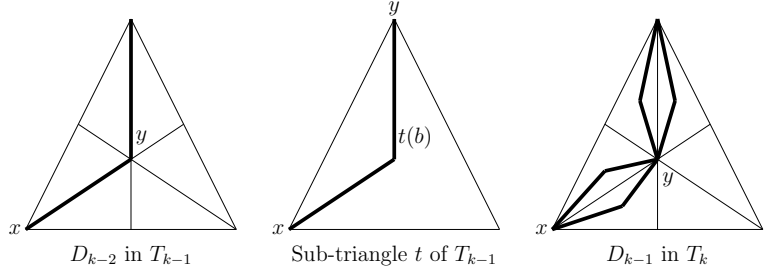

\scalebox{.7}{
\begin{pgfpicture}{-0.2cm}{-0.5cm}{14cm}{4.5cm}
\pgfxyline(0,0)(2,4)
\pgfxyline(0,0)(4,0)
\pgfxyline(2,4)(4,0)
\pgfxyline(1,2)(4,0)
\pgfxyline(2,0)(2,4)
\pgfxyline(0,0)(3,2)

\pgfxyline(5,0)(7,4)
\pgfxyline(5,0)(9,0)
\pgfxyline(7,4)(9,0)

\pgfxyline(10,0)(12,4)
\pgfxyline(10,0)(14,0)
\pgfxyline(12,4)(14,0)
\pgfxyline(11,2)(14,0)
\pgfxyline(12,0)(12,4)
\pgfxyline(10,0)(13,2)

\begin{pgfscope}
\pgfsetlinewidth{2pt}

\pgfxyline(0,0)(2,1.333)
\pgfxyline(2,4)(2,1.333)

\pgfxyline(5,0)(7,1.333)
\pgfxyline(7,4)(7,1.333)

\pgfxyline(10,0)(11,1.111)
\pgfxyline(12,1.333)(11,1.111)

\pgfxyline(10,0)(11.333,0.444)
\pgfxyline(12,1.333)(11.333,0.444)

\pgfxyline(12,1.333)(11.666,2.444)
\pgfxyline(12,4)(11.666,2.444)

\pgfxyline(12,1.333)(12.333,2.444)
\pgfxyline(12,4)(12.333,2.444)
\end{pgfscope}

\pgfputat{\pgfxy(4.8,0)}{\pgfbox[center,center]{$x$}}
\pgfputat{\pgfxy(7,4.2)}{\pgfbox[center,center]{$y$}}
\pgfputat{\pgfxy(7.4,1.75)}{\pgfbox[center,center]{$t(b)$}}

\pgfputat{\pgfxy(-0.2,0)}{\pgfbox[center,center]{$x$}}
\pgfputat{\pgfxy(2.2,1.75)}{\pgfbox[center,center]{$y$}}

\pgfputat{\pgfxy(9.8,0)}{\pgfbox[center,center]{$x$}}
\pgfputat{\pgfxy(12.2,0.95)}{\pgfbox[center,center]{$y$}}

\pgfputat{\pgfxy(2,-0.5)}{\pgfbox[center,center]{$D_{k-2}$ in $T_{k-1}$}}
\pgfputat{\pgfxy(7,-0.5)}{\pgfbox[center,center]{Sub-triangle $t$ of $T_{k-1}$}}
\pgfputat{\pgfxy(12,-0.5)}{\pgfbox[center,center]{$D_{k-1}$ in $T_k$}}
\end{pgfpicture}
}
\caption{\label{embedding} Embedding $D_{k-1}$ into $T_k$}
\end{figure}

\begin{theorem}\label{thm:pcT}
Let $T$ be the limit of the barycentric subdivisions of the triangle and let $p_T$ be the critical probability of percolation on $T$. Then $p_T < 0.282.$
\end{theorem}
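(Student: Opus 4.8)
The plan is to embed a diamond fractal $D(4,4)$ inside the iterated barycentric subdivision $T$ and then use monotonicity together with the known value $p_c(4,4)\sim 0.282$. The point of Proposition \ref{prop:pcmn} is that we want as many branches and as few subdivisions as possible; the figure labelled \ref{embedding} tells us which diamond we can actually realize inside $T$. First I would make precise the picture in Figure \ref{embedding}: inside each sub-triangle $t$ of $T_{k-1}$, after one more barycentric subdivision, the two ``corner'' vertices $x$ and $y$ (say a vertex of $t$ and the midpoint of the opposite side, or the barycenter, whichever the figure indicates) are joined by four internally disjoint paths of length $4$ through the subdivided triangle. In other words, I would exhibit an explicit injection of the vertex and edge sets of $D_1(4,4)$ into $T_k$ that sends the two boundary points of $D_1(4,4)$ to $x$ and $y$ and is compatible with one more level of subdivision, so that iterating the embedding realizes $D_l(4,4)$ as a subgraph of $T_{k+l-1}$ with matched boundary points.

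The second step is to observe that an open crossing of the embedded copy of $D_l(4,4)$ (from $x$ to $y$) forces an open crossing of the corresponding sub-triangle of $T_{k-1}$, and hence, by self-similarity, an open crossing of $T_{k+l-1}$ itself, or at least bounds the crossing probability of $T$ from below by the crossing probability of the diamond. The edges of the embedded diamond are a subset of the edges of $T_{k+l-1}$, each carrying an i.i.d.\ uniform label, so the induced bond percolation on the embedded $D_l(4,4)$ is exactly bond percolation at the same parameter $p$. By Remark \ref{rem:mono1}, $\theta$ is monotone under passing to subgraphs, and by Theorem \ref{thm:pcdiamond} the crossing probability of $D(4,4)$ tends to $1$ for every $p>p_c(4,4)$. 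Therefore for $p>p_c(4,4)$ the crossing probability of $T_n$ does not tend to zero, which gives $p_T\le p_c(4,4)<0.282$.

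One technical wrinkle is that ``$\theta_G$ is nondecreasing in the graph'' in Remark \ref{rem:mono1} is stated for the infinite-cluster-at-the-origin formulation, whereas for $T$ we are using the crossing-probability formulation; I would restate the comparison directly in terms of crossing probabilities, which is immediate: if $H$ is a subgraph of $G$ sharing two marked boundary vertices, then an open $H$-crossing is an open $G$-crossing, so $\mathbb{P}_p(H\text{-crossing})\le\mathbb{P}_p(G\text{-crossing})$, and the $\mathbb{P}_p$'s agree on the common edges. I would also need to be slightly careful that the diamond embeddings at different base triangles, and at successive levels, are edge-disjoint or at least that I only ever use one nested chain of them, so that no independence is lost. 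A second small point is that the limiting value $0.282$ is a numerical bound on the fixed point of $f_{4,4}$; I would cite the computation from Theorem \ref{thm:pcdiamond}/Proposition \ref{prop:fmn} that $p_c(4,4)<0.282$ rather than reproving it.

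The main obstacle is the first step: verifying honestly that $D_1(4,4)$ sits inside one barycentric subdivision of a triangle with the boundary points placed so that the embedding can be iterated consistently. Barycentric subdivision replaces each triangle by six triangles, so the combinatorics of which four length-$4$ disjoint paths appear, and which pair of vertices they connect, has to be read off carefully from Figure \ref{B2(T0)} and Figure \ref{embedding}; the choice of $x$ and $y$ must be one that recurs at every level (e.g.\ a vertex and the barycenter, or two vertices across the subdivision) so that the self-similar nesting actually closes up. Once that embedding is pinned down, the rest is a short monotonicity argument.
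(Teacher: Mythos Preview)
Your overall strategy---embed a diamond, invoke monotonicity of crossing probabilities, then quote $p_c(4,4)<0.282$---is exactly the paper's approach, and your remarks on the crossing versus infinite-cluster formulations and on independence are apt. The divergence is in the embedding step, where your specific guess is wrong and the paper's argument is structured differently.

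You propose that after \emph{one} barycentric subdivision of a \emph{single} sub-triangle $t$, two marked vertices $x,y$ are joined by four internally disjoint paths of length $4$. This cannot hold: in $T_1=B(T_0)$ every vertex except the barycenter has degree $3$, so at most three internally disjoint paths can leave any such vertex. The paper's construction avoids this by working not inside one triangle but across the \emph{two} sub-triangles that share the edge $\{x,y\}$; replacing $\{x,y\}$ by the two paths $x\,t(b)\,y$ and $x\,t'(b)\,y$ through the barycenters of the two neighbouring triangles yields a copy of $D_1(2,2)$, and iterating level by level gives $D_{k-1}(2,2)\subset T_k$. This already proves $p_T\le p_c(2,2)=(\sqrt 5-1)/2$. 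The sharper bound via $D(4,4)$ is then asserted (``by a similar construction $D(2^j,2^n)$ for $j\ge n$ could also be embedded\dots\ omitted'') and the minimisation over this family is handled by Proposition~\ref{prop:pcmn}. So if you want to carry out the $D(4,4)$ step honestly you will need two levels of subdivision and the two-triangle trick simultaneously, not a one-triangle, one-step picture; the paper itself does not spell this out.
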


\begin{proof}
It is sufficient to show an embedding of $D_{k-1}$ into $T_k$ which then implies that $D(2,2) \subset T$. Then by the monotonicity of the critical probability in the subgraph relation $p_T \le p_c(2,2)$. We begin by embedding $D_1$ into $T_2$. Note that there are in fact three copies of $D_1$ in $T_2$: $\{v_0,1(b),b,6(b)\}$, $\{v_1,2(b),b,3(b)\}$, and $\{v_2,4(b),b,5(b)\}$. Now suppose that we have an embedding of $D_{k-2}$ in $T_{k-1}$. The rule for identifying the embedding of $D_{k-1}$ in $T_k$ is as follows: for each lowest-level sub-triangle $t$ of $T_{k-1}$ containing an edge $\{x,y\}$ of the embedded $D_{k-2}$, the edge $\{x,y\}$ is replaced by two edges $\{x,t(b)\}$ and $\{t(b),y\}$ (see Figure \ref{embedding}). However since each edge $\{x,y\}$ is an edge of two sub-triangles, this actually replaces the edge $\{x,y\}$ with a copy of $D_1(2,2)$, which is the recursive definition of $D_k(2,2)$. 

Three copies of $D$ can be embedded in $T$, see Figure \ref{embedding} for where two of them can be taken. Because there are only finitely many diamonds needed to cross $T$
, we have that $\frac{\sqrt{5}-1}{2} = p_c(2,2) \ge p_T$. 

By a similar construction $D(2^{j},2^{n})$ for $j \ge n$ could also be embedded. These constructions are notationally inelegant and so are omitted. However by using Proposition \ref{prop:pcmn} it can be shown that $p_c(4,4)$ is the minimum critical value amongst these families and that $0.281< p_c(4,4) < 0.282$.
\end{proof}

\subsection{The non-p.c.f. Sierpinski Gasket}\label{ssec:nonSG}

\begin{figure}[t]
\includegraphics[scale=1]{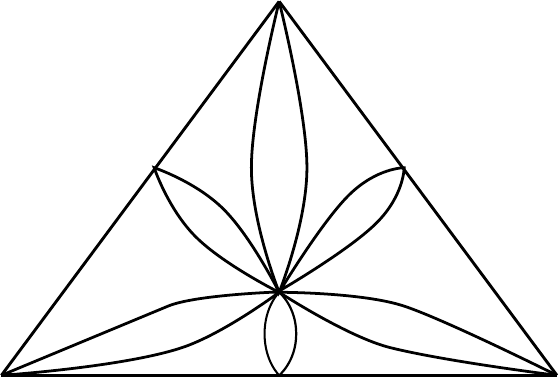} 
\caption{Level one approximation the non-p.c.f. Sierpinski gasket.}
\label{fig:nonpcf}
\end{figure}

As was discussed in the introduction \cite{Kigami} presents the full definition of post-critically finite fractals. However a sufficient criterion for a fractal to not be p.c.f. is for the approximating graphs to have unbounded vertex degree. The non-p.c.f. Sierpinski gasket was introduced in \cite{nonpcf} as an example of finitely ramified fractal which is not p.c.f., in \cite{BajorinEtAl} the spectra of Laplacians on it  and the diamond fractal were calculated. There is a sense in which the diamond fractal can be seen as a non-p.c.f. analog to the unit interval when it is viewed as a p.c.f. set.  In this same sense the non-p.c.f. Sierpinski gasket is an analog of the usual Sierpinski gasket. When viewed as a p.c.f. set, the unit interval is approximated by graphs constructed by taking an edge and adding a number of nodes to subdivide the pre-existing edge, that is one branch but multiple subdivisions. The diamond fractal comes about where in addition to this procedure extra copies are added. In the same way the non-p.c.f. Sierpinski gasket is a way to add extra copies to the approximating graphs for the Sierpinski gasket. The extra vertex at the barycenter is introduced this way.  

\begin{definition}
Let $S_0$ be a the complete graph on $\{v_0,v_1,v_2\}$. Let $S_1$ be the graph with vertices $\{v_0,v_1,v_2,b_{01},b_{12}b_{20},b\}$ and edges as in Figure \ref{fig:nonpcf}. We will say that $S_1$ consists of six triangles whose edge sets are disjoint. That is the triangles $\{v_0,b_{01},b\}$ and $\{v_1,b_{01},b\}$ each use a different one of the multiple edges connecting $b_{01}$ and $b$. Iteratively define $S_n$ by taking $S_{n-1}$ and replacing each triangle with a copy of $S_1$. Denote by $S$ the limiting infinite graph, which we call the non-p.c.f. Sierpinski gasket.
\end{definition}

\begin{theorem}\label{thm:pSup}
Let $S$ be the non-p.c.f. Sierpinski gasket and $p_S$ be the critical probability of percolation on $S$. Then $p_S < 0.282$.
\end{theorem}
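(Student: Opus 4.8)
The plan is to mirror the proof of Theorem~\ref{thm:pcT}: I would embed the diamond fractals into the approximating graphs $S_n$ and then extract the bound from monotonicity of the critical probability under the subgraph relation (Remark~\ref{rem:mono1}), exactly as $D(2,2)\subset T$ yielded $p_T\le p_c(2,2)$ there. The structural point is that the refinement $S_{n-1}\mapsto S_n$ is barycentric subdivision with the spokes doubled: each triangular cell $t$ is replaced by a copy of $S_1$, i.e.\ by six sub-cells meeting at a new barycenter $\beta_t$ that is joined to each corner and each side-midpoint of $t$, the sole difference from $B(t)$ being that every one of these six spoke edges occurs twice. Hence, for any edge $\{x,y\}$ of $S_n$ -- which is a side of a \emph{unique} cell $t$ -- the subdivision of $t$ contains two internally vertex-disjoint paths of length two joining $x$ to $y$: one through the midpoint $m_{xy}$ of the side $\{x,y\}$, and one through $\beta_t$, using one copy of each of the doubled spokes $\{x,\beta_t\}$ and $\{\beta_t,y\}$. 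This is exactly what allows an edge to be replaced by a copy of $D_1(2,2)$ upon subdivision, in place of the ``an edge lies in two cells'' mechanism used for $T$.

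First I would embed $D_1(2,2)$ into $S_2$ with endpoints a corner $v_0$ of $S_1$ and its barycenter $b$ -- there are three such copies, one per corner, just as three copies of $D_1$ sit in $T_2$ in the proof of Theorem~\ref{thm:pcT} -- by subdividing the two cells of $S_1$ having $v_0$ as a vertex and taking in each the length-two path from $v_0$ to $b$ through the midpoint of the side $\{v_0,b\}$. One checks these two paths are internally disjoint and that the four edges they use lie in four distinct cells of $S_2$. Then, inductively, supposing $D_{k-1}(2,2)$ is embedded in $S_k$ in such a way that no cell of $S_k$ carries more than one of its edges, I would refine each edge $\{x,y\}$ of the embedded copy (a side of its cell $t$) by the copy of $D_1(2,2)$ just described; because $t$ held no other embedded edge the refinements do not interact, and routing the spoke edges $\{x,\beta_t\}$ and $\{\beta_t,y\}$ through the sub-cells at $x$ and at $y$ that do \emph{not} contain $m_{xy}$ keeps the four new edges in four distinct cells of $S_{k+1}$, preserving the invariant. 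This produces $D_{k-1}(2,2)\hookrightarrow S_k$ for every $k$, hence $D(2,2)\subset S$, and $p_S\le p_c(2,2)$ by the reasoning of Theorems~\ref{thm:pcdiamond} and~\ref{thm:pcT}.

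To get the quantitative statement $p_S<0.282$ I would then run, with the same (admittedly notation-heavy) bookkeeping that gives the larger families for $T$, the embeddings $D(2^j,2^n)\subset S$ for $j\ge n$; if anything the doubled spokes make this easier, since around each corner of a subdivided cell there are extra internally disjoint short crossings available to serve as branches. Proposition~\ref{prop:pcmn} then identifies $p_c(4,4)$ as the minimal critical value in this family, and since $0.281<p_c(4,4)<0.282$ we conclude $p_S\le p_c(4,4)<0.282$.

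The part I expect to be the genuine obstacle is the induction in the second paragraph: verifying that the iterated edge-refinement can be performed so that the branches of the growing diamond remain internally vertex-disjoint, which amounts to a consistent global choice of which copy of each spoke to use together with the ``at most one embedded edge per cell'' invariant. It is precisely here that the multigraph structure of $S$ -- at first sight a nuisance -- is indispensable: without the doubled spokes, the midpoint path and the barycenter path replacing an edge would be forced to share the edge $\{x,\beta_t\}$, the invariant would fail immediately, and the construction would collapse after a single subdivision, leaving only the trivial bound.
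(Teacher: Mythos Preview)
Your approach is essentially the paper's: embed diamond fractals into $S$ and appeal to monotonicity. Your $D(2,2)$ construction is actually more careful than the paper's terse ``the proof of Theorem~\ref{thm:pcT} goes through'': you correctly observe that in $S$ every edge lies in a \emph{single} cell (the six triangular cells of $S_1$ have disjoint edge sets), so the $T$-mechanism of routing through the barycenters of two abutting cells is not literally available, and you replace it by the midpoint path together with one barycenter path inside that single cell. That alternative is valid, and the one-edge-per-cell invariant is precisely what makes the induction run.

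Where you diverge from the paper is the assertion that all $D(2^j,2^n)$ with $j\ge n$ embed in $S$, with the doubled spokes ``making this easier''. The paper explicitly notes the opposite: in $S$ the construction works for two or four branches but \emph{not} for eight or more, because the number of internally disjoint $v_0$--$b$ paths in $S_2$ and $S_3$ avoiding $b_{01}$ and $b_{20}$ caps out at four. Your own mechanism does not manufacture the extra branches either --- iterating the midpoint/barycenter replacement yields $D_k(2,2)$, not $D_1(2^k,2^k)$, since the level-$k$ paths all pass through the lower-level pivot vertices $m_{xy}$ and $\beta_t$. Four branches already deliver the $0.282$ bound, so the over-claim is harmless for the theorem; but the four-branch embedding genuinely requires a separate disjoint-path count in $S_2$, which neither you nor the paper write out.
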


\begin{proof} 
The proof of Theorem \ref{thm:pcT} goes through for $S$ using diamonds with two or four branches but not for eight or more branches. This limit comes from the structure of $S_{2}$ and $S_{3}$ where the number of non-intersecting paths from $v_0$ to $b$ not going through $b_{01}$ or $b_{20}$ achieves its maximum of $4$. The labels for the points are as in Figure \ref{T0}. Hence a maximum of four branches.  However, this is sufficient for the same upper bound as for $T$. 
\end{proof}

\subsection{Equivalence of Percolation on T and S}\label{ssec:equiv}
We now turn to the relationship between $p_T$ and $p_S$. 

\begin{definition}
Let $T_0 = (x,y,z)$ be a spacial triangle, that is a particular embedding of $T_0$ into $\mathbb{R}^{2}$. Then the boundary of $T_0$ is the union of the edges, that is 
\begin{equation*}
	\partial T_0 = (x,y) \cup (y,z) \cup (z,x),
\end{equation*}
and the interior of $T_0$ is $\stackrel{\circ}{T_0} = T_0 \setminus \partial T_0$.
\end{definition}

\begin{definition}\label{def:collapse}
Let $\{S_n\}_{n=0}^{\infty}$ be the sequence of graph approximations to $S$ and $\omega_S(e)$ to be a labeling of the edges in $S$. Notice that $S_n$ is a graph minor of $S_{n+1}$ so there is a natural way in which the vertices of $S_n$ are also vertices of $S_{n+1}$. Identify the vertices $x,z$ in $S$ such that for some $n \ge 2$, $x,z \in S_n$ and they are mid-points of a pair of multi edges in $S_{n-1}$. The quotient by this equivalence relation maps the vertices of $S$ to the vertices of $T$. All adjacent pairs of vertices are joined by one or two edges, call this graph $\tilde{S}$. If there are two edges $e_1,e_2$ replace them by a single edge $e$ and assign $\omega_T(e) = \min\{\omega_S(e_1),\omega_S(e_2)\}$.

This procedure maps a labeling on the edges of $S$ to a labeling on the edges of $T$, denote the map by $\pi$. When writing $\pi(A)$ for $A \subset \Omega_S$ we mean that $\pi$ is applied to each element of $A$. 
\end{definition}

The action of $\pi$ acts naturally in two steps. The first is to take $S$ and map the vertex set of $S$ onto the vertex set of $T$. The second is to map the edge set of $S$ onto the edge set of $T$. The graph obtained between these two steps is called $\tilde{S}$. Notice also that $T$ is the simple graph resulting from replacing all multi-edges in $\tilde{S}$ with a single edge. 

As an immediate consequence of Lemma \ref{lem:mono2} we get 

\begin{corollary}
$p_{\tilde{S}} \le p_S < 0.282$. 
\end{corollary}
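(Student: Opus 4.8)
The plan is to deduce the statement directly from Lemma~\ref{lem:mono2} and Theorem~\ref{thm:pSup}; essentially all that needs checking is that the passage from $S$ to $\tilde S$ described in Definition~\ref{def:collapse} is an instance of the quotient construction to which Lemma~\ref{lem:mono2} applies.

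First I would record the structural fact that $\tilde S$ is precisely the vertex quotient of $S$. By construction the map $\pi$ factors into two steps: a collapse of certain pairs of vertices, followed by a replacement of each resulting multi-edge by a single edge. The intermediate graph is $\tilde S$, so $\tilde S$ and $S$ have the same edge set, and $\tilde S = S/{\sim}$, where $\sim$ is the equivalence relation generated by the identifications ``$x \sim z$ whenever $x$ and $z$ are the midpoints of the two edges of a multi-edge of $S_{n-1}$ sitting inside $S_n$'' for $n \ge 2$. In particular $\mathbb{P}_{\tilde S} = \mathbb{P}_S$, as required by the hypotheses of Lemma~\ref{lem:mono2}.

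Next I would verify the one substantive hypothesis of Lemma~\ref{lem:mono2}: that every $\sim$-class is finite with a uniform bound on its size. A midpoint of one edge of a multi-edge of $S_{n-1}$ is identified with exactly one other vertex, namely the midpoint of the partner edge of that same multi-edge, and with no further vertex; every vertex that is not such a midpoint lies in a singleton class. Hence each equivalence class has at most two elements, giving the uniform bound $2$. (This is consistent with the observation in the text that $\tilde S$ carries the vertex set of $T$, which is obtained from that of $S$ exactly by these pairwise collapses.) The one point meriting a moment's care is confirming that distinct multi-edges do not share midpoints, so that the classes really stay of size $\le 2$ rather than chaining together; this is clear from the local picture of one copy of $S_1$ inside $S_n$.

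With the hypotheses in hand, Lemma~\ref{lem:mono2} yields $\theta_S(p) \le \theta_{\tilde S}(p)$ for every $p \in [0,1]$. Since $\theta \ge 0$, it follows that $\{\, p : \theta_{\tilde S}(p) = 0 \,\} \subseteq \{\, p : \theta_S(p) = 0 \,\}$, and taking suprema gives $p_{\tilde S} \le p_S$. Combining this with the bound $p_S < 0.282$ from Theorem~\ref{thm:pSup} completes the proof. I do not expect a genuine obstacle here; the only nontrivial ingredient is the (routine) verification that the vertex identifications have uniformly bounded fibers.
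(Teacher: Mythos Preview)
Your proposal is correct and follows exactly the route the paper intends: the paper simply asserts the corollary ``as an immediate consequence of Lemma~\ref{lem:mono2}'' together with Theorem~\ref{thm:pSup}, and your write-up just supplies the routine verification that the equivalence classes in Definition~\ref{def:collapse} have size at most two so that Lemma~\ref{lem:mono2} applies.
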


The following theorem shows the advantage of modeling open and closed edges based on a random variable taking values in $[0,1]$ rather than letting an edge be open or closed as a Bernoulli trial with parameter $p$. 

\begin{theorem}\label{thm:SBcomp}
$p_T/2 \le p_{\tilde{S}} \le p_T$
\end{theorem}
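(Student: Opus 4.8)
The plan is to exploit the fact that $\tilde S$ and $T$ have the \emph{same vertex set} and that $T$ is obtained from $\tilde S$ by collapsing each multi-edge (there are at most two parallel edges between any adjacent pair) to a single edge, with the rule $\omega_T(e) = \min\{\omega_S(e_1),\omega_S(e_2)\}$. So I would compare the two percolation processes edge by edge under this correspondence. The upper bound $p_{\tilde S} \le p_T$ is the easy direction: whenever $e$ is open in $T_p$, at least one of the two parallel edges $e_1,e_2$ lying over it is open in $(\tilde S)_p$ (since $\min\{\omega_S(e_1),\omega_S(e_2)\} < p$ forces one of them below $p$), so any crossing cluster in $T_p$ lifts to a crossing cluster in $(\tilde S)_p$ on the same vertex set. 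Hence $\theta_T(p) \le \theta_{\tilde S}(p)$ for all $p$, giving $p_{\tilde S} \le p_T$. (Alternatively this is immediate from Lemma \ref{lem:mono2} applied with the trivial equivalence relation, or from the subgraph monotonicity in Remark \ref{rem:mono1} since $T$ is a graph minor of $\tilde S$.)

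For the lower bound $p_T/2 \le p_{\tilde S}$, i.e. $\theta_{\tilde S}(p) = 0 \implies \theta_T(2p) = 0$ (for $p$ small), I would run percolation on $\tilde S$ with parameter $p$ and on $T$ with parameter $q := 2p$ using a coupling. Each edge $e$ of $T$ covers one or two edges of $\tilde S$; give those parallel edges i.i.d.\ uniform labels $\omega_S(e_1),\omega_S(e_2)$ on $[0,1]$ and set $\omega_T(e) = \min$ as in Definition \ref{def:collapse}. The event that $e$ is open in $T_{2p}$ is $\{\min\{\omega_S(e_1),\omega_S(e_2)\} < 2p\}$, which has probability $1-(1-2p)^2 = 2p(2-2p)\cdot\tfrac12$... more precisely $1-(1-2p)^2 \le 2\cdot(1 - (1-p)) \cdot$ — the point is the union bound: $\mathbb{P}(\omega_T(e) < 2p) = \mathbb{P}(\omega_S(e_1) < 2p \text{ or } \omega_S(e_2) < 2p) \le 2\cdot 2p$ is too weak, so instead I would couple more carefully. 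The right statement is: $\{e \text{ open in } T_{2p}\} \subseteq \{e_1 \text{ open in } (\tilde S)_{2p}\} \cup \{e_2 \text{ open in }(\tilde S)_{2p}\}$, and then use a second, independent resampling argument, or better: partition the two parallel copies of each multi-edge and observe that an open path in $T_{2p}$ of length $\ell$ uses $\ell$ edges, each lifting to an open edge in a $(\tilde S)$-configuration with parameter $2p$; split $(\tilde S)_{2p}$ into two independent copies of $(\tilde S)_p$-like processes by the standard splitting $\{\omega < 2p\} = \{\omega < p\} \sqcup \{p \le \omega < 2p\}$, each marginally an open-edge process at level $p$. If a crossing exists at level $2p$ in $T$, then in $\tilde S$ at level $2p$ there is a crossing, and by the two-colouring at least ``half'' the structure survives — this is where the factor $2$ enters. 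Concretely I expect the argument to run: $\theta_T(2p) \le \theta_{\tilde S}(2p) \le \theta_{\tilde S \text{ with 2 independent } p\text{-colourings}}$, and a crossing in the latter forces a crossing in one colour class, each distributed as $(\tilde S)_p$; a union bound over the two colours and over the finitely many diamonds/paths needed to cross then yields $\theta_T(2p) \le 2\,\theta_{\tilde S}(p)$-type control, so $\theta_{\tilde S}(p)=0 \implies \theta_T(2p)=0$.

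The main obstacle is making the ``factor of $2$'' rigorous: a naive union bound $\mathbb{P}(e \text{ open at } 2p) \le 2\,\mathbb{P}(e \text{ open at } p)$ holds per edge but does not directly control the \emph{crossing} probability, which involves infinitely many edges in the limit. The clean way around this, which I would adopt, is the standard two-independent-copies splitting of a Bernoulli($2p$)-ish environment (using $\omega$'s on $[0,1]$, exactly the advantage flagged in the remark preceding the theorem): write the level-$2p$ open set on $\tilde S$ as the union of two independent level-$p$ open sets (identifying $\{\omega<2p\}$ with two independent events of probability $p$ via $\omega \mapsto (\omega/2, (\omega - p)\cdot\mathbf{1}_{\omega\ge p}\cdot\text{rescale})$, or more simply by assigning each parallel multi-edge of $\tilde S$ to one of two colour classes); then $\{\text{crossing of }\tilde S\text{ at }2p\} \subseteq \{\text{crossing at }p\text{ in colour }1\} \cup \{\text{crossing at }p\text{ in colour }2\}$, each having probability $\theta_{\tilde S}(p) = 0$ by hypothesis, so the left side has probability $0$. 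Combined with $\theta_T(2p) \le \theta_{\tilde S}(2p)$ from the easy direction this closes the argument and gives $p_T \le 2 p_{\tilde S}$. I would double-check the colour-splitting is legitimate given that $\tilde S$'s edges that are \emph{not} doubled (single edges) must also be handled — for those one simply uses the elementary bound that a single uniform variable below $2p$ is below $p$ with conditional probability $\ge \tfrac12$, which suffices for the one-sided conclusion $\theta=0$.
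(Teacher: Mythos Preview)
Your upper bound $p_{\tilde S}\le p_T$ is fine and matches the paper's one-line argument via Remark~\ref{rem:mono1}.

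The lower bound, however, is set up in the wrong direction. You state the goal as ``$\theta_{\tilde S}(p)=0 \Rightarrow \theta_T(2p)=0$'', but trace through what this would yield: for every $p<p_{\tilde S}$ you would get $2p\le p_T$, hence $p_{\tilde S}\le p_T/2$, the \emph{opposite} of the claimed inequality. What is actually needed is $\theta_{\tilde S}(p)\le \theta_T(2p)$ (equivalently, $\theta_T(2p)=0\Rightarrow \theta_{\tilde S}(p)=0$), and your colour-splitting machinery is aimed at the wrong target. Moreover, the inclusion you propose,
\[
\{\text{crossing of }\tilde S\text{ at }2p\}\subseteq\{\text{crossing in colour }1\text{ at }p\}\cup\{\text{crossing in colour }2\text{ at }p\},
\]
is false: an open crossing at level $2p$ may alternate between edges of the two colour classes, so it need not produce a monochromatic crossing in either class. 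The same objection kills the single-edge ``conditional probability $\ge\tfrac12$'' patch at the end.

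The paper's argument for $p_T/2\le p_{\tilde S}$ is much more direct and goes the right way. One simply observes that connectivity in $\tilde S$ at parameter $p$ depends only on whether, for each adjacent pair of vertices, \emph{at least one} of the (one or two) parallel edges is open. This is exactly percolation on $T$ with inhomogeneous edge parameters: $p$ on the single edges (those on $\partial T_0$) and $1-(1-p)^2=p(2-p)$ on the doubled edges (those in $\stackrel{\circ}{T_0}$). Since both $p$ and $p(2-p)$ are at most $2p$, this inhomogeneous $T$-process is dominated by homogeneous percolation on $T$ at parameter $2p$, giving $\theta_{\tilde S}(p)\le \theta_T(2p)$ and hence $p_T/2\le p_{\tilde S}$. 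No splitting or union bound over colours is needed.
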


\begin{proof}
Since $T$ can be seen as a subgraph of $\tilde{S}$ with the edges chosen so that if two edges connect a pair of vertices the edge with the smaller value of $\omega(e)$ is chosen. We have by Remark \ref{rem:mono1} that $p_{\tilde{S}} \le p_T.$

The edge set, $E_{\tilde{S}}$, can be partitioned so that in each subset are edges that connect the same two vertices. In each element of this partition is either one or two edges. The edge set of $T$ is the set of partitions of $E_{\tilde{S}}$. So the event: 
\begin{quote}
	$\{$the set of vertices in $\tilde{S}$ connected to the origin by elements of the partition where in each partition there is at least one open edge with parameter $p$ is infinite$\}$
\end{quote}
 can be rewritten as 
 \begin{quote}
 	$\{$there is an infinite open cluster in $T$ containing the origin with parameter $p$ on $T_0$ and parameter $p(2-p)$ on $\stackrel{\circ}{T_0}$.$\}$
\end{quote}
This second event is contained in the event that 
\begin{quote}
	$\{$with parameter $2p$ there is an infinite open cluster containing the origin in $T$.$\}$
\end{quote}
Thus 
\begin{equation*}
	\theta_T(2p) \ge \theta_{\tilde{S}}(p).
\end{equation*}
This then implies that $p_T/2 \le p_{\tilde{S}}$.
\end{proof}

\begin{corollary}\label{cor:SScomp}
$p_S \ge p_T/2$.
\end{corollary}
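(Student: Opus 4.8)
The plan is to deduce Corollary~\ref{cor:SScomp} from the two results that immediately precede it, namely the Corollary stating $p_{\tilde S} \le p_S$ and Theorem~\ref{thm:SBcomp} stating $p_T/2 \le p_{\tilde S} \le p_T$. Chaining the relevant halves of these, one gets $p_T/2 \le p_{\tilde S} \le p_S$, which is exactly the assertion $p_S \ge p_T/2$. So the ``proof'' is essentially a one-line transitivity argument, and the real content has already been established upstream.

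Concretely, I would first invoke Theorem~\ref{thm:SBcomp} to write down the left-hand inequality $p_T/2 \le p_{\tilde S}$; this is the nontrivial step, but it is assumed. Then I would invoke the preceding Corollary (itself an immediate consequence of Lemma~\ref{lem:mono2}, since $\tilde S$ is obtained from $S$ by collapsing pairs of vertices into uniformly-boundedly-many-per-class equivalence classes, hence $\theta_S(p) \le \theta_{\tilde S}(p)$ and so $p_{\tilde S} \le p_S$) to get $p_{\tilde S} \le p_S$. Combining the two gives $p_T/2 \le p_{\tilde S} \le p_S$, and in particular $p_S \ge p_T/2$, which completes the argument.

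There is essentially no obstacle here: the only thing to be careful about is the direction of the monotonicity inequalities for $\theta$ versus the direction for the critical probabilities $p$. Since $\theta$ is nondecreasing in $p$ (Remark~\ref{rem:mono1}) and $p_G = \sup\{p : \theta_G(p) = 0\}$, a pointwise inequality $\theta_G(p) \le \theta_{H}(p)$ for all $p$ translates into $p_H \le p_G$. Applying this with $G = S$, $H = \tilde S$ (using $\theta_S \le \theta_{\tilde S}$ from Lemma~\ref{lem:mono2}) gives $p_{\tilde S} \le p_S$ with the inequality pointing the right way, and then Theorem~\ref{thm:SBcomp} supplies $p_T/2 \le p_{\tilde S}$, so the chain closes correctly. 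The proof can therefore be stated in a single sentence: $p_S \ge p_{\tilde S} \ge p_T/2$ by the preceding Corollary and Theorem~\ref{thm:SBcomp}.
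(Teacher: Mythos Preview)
Your proposal is correct and matches the paper's approach exactly: the corollary is stated without proof because it follows immediately by chaining $p_T/2 \le p_{\tilde S}$ from Theorem~\ref{thm:SBcomp} with $p_{\tilde S} \le p_S$ from the preceding Corollary (itself a consequence of Lemma~\ref{lem:mono2}). Your care with the direction of the inequalities is appropriate and the one-line conclusion $p_S \ge p_{\tilde S} \ge p_T/2$ is precisely what the paper intends.
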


\section{The Hexacarpet}\label{sec:hex}
The original impetus for studying the hexacarpet was that it was suspected to be computationally more tractable than the Sierpinski carpet for a summer REU project at Cornell University. The intuition was that the result would be infinitely ramified but the number of vertices in the approximating graphs would grow slower than for the Sierpinski carpet. However, as computers became more powerful over the last 20 years it was possible to computationally study the Sierpinski carpet itself and the hexacarpet was passed over for the theoretically better understood Sierpinski carpet \cite{TeplyaevPrivate}. Begue et Al. \cite{Begue} returned to the study of the hexacarpet in 2011. 

\begin{definition}\label{def:hexa}
Let $T_n = B^{n}(T_0)$ be the $n^{th}$ barycentric subdivision of the triangle $T_0$. Set $H_n$ to be the planar pre-dual of of $T_n$ (see Figure \ref{dual1}). Let $H =\lim_{n \rightarrow \infty} H_n$. Call $H$ the hexacarpet.
\end{definition} 

Planar duals are discussed in more detail in Definition \ref{dualdef} in the next section.

\begin{figure}[t]
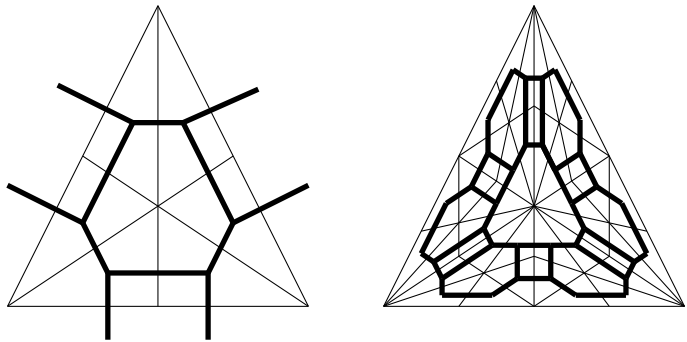

\begin{pgfpicture}{-5cm}{-0.25cm}{4cm}{4cm}

\pgfxyline(0,0)(2,4)
\pgfxyline(0,0)(4,0)
\pgfxyline(2,4)(4,0)
\pgfxyline(1,2)(4,0)
\pgfxyline(2,0)(2,4)
\pgfxyline(0,0)(3,2)

\pgfxyline(1,2)(1,0.666)
\pgfxyline(2,1.333)(0.5,1)
\pgfxyline(0,0)(1.5,1.666)

\pgfxyline(1.5,1.666)(2,4)
\pgfxyline(2,1.333)(1.5,3)
\pgfxyline(1,2)(2,2.666)

\pgfxyline(2,2.666)(3,2)
\pgfxyline(2,1.333)(2.5,3)
\pgfxyline(2,4)(2.5,1.666)

\pgfxyline(2.5,1.666)(4,0)
\pgfxyline(2,1.333)(3.5,1)
\pgfxyline(3,2)(3,0.666)

\pgfxyline(3,0.666)(2,0)
\pgfxyline(2,1.333)(3,0)
\pgfxyline(4,0)(2,0.666)

\pgfxyline(2,0.666)(0,0)
\pgfxyline(2,1.333)(1,0)
\pgfxyline(2,0)(1,0.666)

\begin{pgfscope}
\pgfsetlinewidth{2pt}

\pgfxyline(.5,0.704)(0.833,1.370)
\pgfxyline(0.833,1.370)(1.166,1.592)
\pgfxyline(1.166,1.592)(1.5,1.37)
\pgfxyline(1.5,1.37)(1.333,1.036)
\pgfxyline(1.333,1.036)(0.666,0.592)
\pgfxyline(0.666,0.592)(.5,.704)

\pgfxyline(1.166,1.592)(1.388,2.036)
\pgfxyline(1.5,1.37)(1.722,1.814)

\pgfxyline(1.722,1.814)(1.388,2.036)
\pgfxyline(1.388,2.036)(1.388,2.481)
\pgfxyline(1.388,2.481)(1.722,3.148)
\pgfxyline(1.722,3.148)(1.888,3.036)
\pgfxyline(1.888,3.036)(1.888,2.147)
\pgfxyline(1.888,2.147)(1.722,1.814)

\pgfxyline(1.888,2.147)(2.111,2.147)
\pgfxyline(1.888,3.036)(2.111,3.036)

\pgfxyline(2.111,3.036)(2.111,2.147)
\pgfxyline(2.111,3.036)(2.277,3.148)
\pgfxyline(2.277,3.148)(2.611,2.481)
\pgfxyline(2.611,2.481)(2.611,2.036)
\pgfxyline(2.611,2.036)(2.277,1.814)
\pgfxyline(2.277,1.814)(2.111,2.147)

\pgfxyline(2.111,2.147)(2.5,1.37)
\pgfxyline(2.611,2.036)(2.833,1.592)

\pgfxyline(2.5,1.37)(2.833,1.592)
\pgfxyline(2.833,1.592)(3.166,1.37)
\pgfxyline(3.166,1.37)(3.5,0.703)
\pgfxyline(3.5,0.703)(3.333,0.592)
\pgfxyline(3.333,0.592)(2.666,1.036)
\pgfxyline(2.666,1.036)(2.5,1.37)

\pgfxyline(3.333,.592)(3.222,.37)
\pgfxyline(2.666,1.036)(2.555,.814)

\pgfxyline(3.222,.37)(2.555,.814)
\pgfxyline(3.222,.37)(3.222,.148)
\pgfxyline(3.222,.148)(2.555,.148)
\pgfxyline(2.555,.148)(2.222,.37)
\pgfxyline(2.222,.37)(2.222,.814)
\pgfxyline(2.222,.812)(2.555,.814)

\pgfxyline(2.222,.812)(1.777,.812)
\pgfxyline(2.222,.37)(1.777,.37)

\pgfxyline(1.777,.812)(1.777,.37)
\pgfxyline(1.777,.37)(1.444,.148)
\pgfxyline(1.444,.148)(.777,.148)
\pgfxyline(.777,.148)(.777,.37)
\pgfxyline(.777,.37)(1.444,.814)
\pgfxyline(1.444,.814)(1.777,.812)

\pgfxyline(.777,.37)(0.666,0.592)
\pgfxyline(1.444,.814)(1.333,1.036)

\end{pgfscope}

\pgfxyline(-5,0)(-3,4)
\pgfxyline(-5,0)(-1,0)
\pgfxyline(-3,4)(-1,0)
\pgfxyline(-4,2)(-1,0)
\pgfxyline(-3,0)(-3,4)
\pgfxyline(-5,0)(-2,2)

\begin{pgfscope}
\pgfsetlinewidth{2pt}

\pgfxyline(-3.666,0.444)(-4,1.111)
\pgfxyline(-4,1.111)(-3.333,2.444)
\pgfxyline(-3.333,2.444)(-2.666,2.444)
\pgfxyline(-2.666,2.444)(-2,1.111)
\pgfxyline(-2,1.111)(-2.333,0.444)
\pgfxyline(-2.333,0.444)(-3.666,0.444)

\pgfxyline(-3.666,0.444)(-3.666,-0.444)
\pgfxyline(-2.333,0.444)(-2.333,-0.444)
\pgfxyline(-4,1.111)(-5,1.611)
\pgfxyline(-4.333,2.943)(-3.333,2.444)
\pgfxyline(-2,1.111)(-1,1.611)
\pgfxyline(-2.666,2.444)(-1.666,2.889)
\end{pgfscope}

\end{pgfpicture}
\caption{\label{dual1}Dual Graphs}
\end{figure}

The goal of this section is to show that $p_H<1$. Observe from Figures \ref{dual1} and \ref{Hexagon} that $H_1$ can be embedded in $\mathbb{R}^{2}$ with edge lengths at least $1$. This minimal edge length will be used later. Another observation to be taken from these two pictures is that $H_1$ is composed of six copies of $H_0$ joined together. Similarly, six copies of $H_1$ could be joined together to form $H_2$ and with the appropriate scaling, all of the edges of $H_2$ would have length at least $1$. The increasing union of $H_n$ embedded in this manner is the hexacarpet $H$. The following theorem of Kozma is based on relating cut sets in the primal graph with open box crossings in the dual graph using a Pierl-type path counting argument.

\begin{theorem}{(Kozma, \cite{Kozma})}\label{thm:kozma}
Let $G$ be a planar graph with no vertex accumulation points such that
\begin{enumerate}
	\item There exists numbers $K$ and $D$ such that for all $v \in G$ and for all $r \ge 1$ one has for the open ball in the Euclidean metric, $B(v,r)$, that the number of vertices in the ball satisfies $|B(v,r)| \le Kr^{D}$. 
	\item There exists numbers $k$ and $\epsilon >0$ such that for any finite non-empty set of vertices $J \subset G$, $|\partial J| \ge k|J|^{\epsilon}$.
\end{enumerate}
Let $p_G$ be the critical probability for independent bond percolation on $G$. Then $p_G < 1$. 
\end{theorem}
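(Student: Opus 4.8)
The plan is to prove $p_G<1$ by a Peierls-type contour argument in the planar dual $G^{*}$. Since, by the zero–one law recalled in Section~\ref{sec:perc}, $p_G=\sup\{p:\theta_G(p)=0\}$, it suffices to exhibit a single $p<1$ for which $\theta_G(p)>0$; equivalently, fixing a reference vertex $o$ (the ``origin'') and writing $q:=1-p$, it suffices to show that the probability that the open cluster $\mathcal O$ of $o$ is finite drops below $1$ once $q$ is small enough.

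First I would record that $G$ is locally finite: hypothesis~(1) with $r=1$ gives $\deg(v)\le|B(v,1)|\le K$, so the degree is bounded by $\Delta:=K$ and, together with the absence of vertex accumulation points, $G^{*}$ is a well-defined locally finite plane graph. The standard planar-duality dictionary then says that if $\mathcal O$ is finite, its edge boundary $\partial_E\mathcal O$ consists of closed edges whose dual edges contain a simple cycle $\gamma\subset G^{*}$ separating $o$ from infinity; conversely $o$ lies in an infinite open cluster as soon as no such ``closed contour'' surrounds it. Hence
\begin{equation*}
\mathbb P_G(|\mathcal O|<\infty)\;\le\;\sum_{\gamma}q^{|\gamma|},
\end{equation*}
the sum over simple dual cycles $\gamma$ surrounding $o$, and it is enough to bound the number $N_\ell$ of such contours of length $\ell$ by $C\lambda^{\ell}$: then $\sum_\ell N_\ell q^\ell<\infty$ for $q<1/\lambda$, and by dominated convergence this sum tends to $0$ as $q\to0$, so it is $<1$ for $p$ close enough to $1$.

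The heart of the matter is the bound $N_\ell\le C\lambda^\ell$, and this is where both hypotheses enter. A contour $\gamma$ of length $\ell$ surrounding $o$ encloses a finite vertex set $J\ni o$, and every edge of $\partial_E J$ must cross $\gamma$, so $|\partial_E J|\le\ell$; hypothesis~(2) then forces $|J|\le(\ell/k)^{1/\epsilon}$. Each dual vertex visited by $\gamma$ is a face incident to a vertex of $J$, and each vertex of $J$ meets at most $\Delta$ faces, so $\gamma$ lives on an explicit set $\mathcal F$ of at most $\Delta(\ell/k)^{1/\epsilon}$ faces; using hypothesis~(1) through the planar embedding (with edge lengths bounded above and below, which is where the plane geometry rather than mere combinatorics is used — in particular to keep the faces of $G$, i.e.\ the degrees of $G^{*}$, of bounded size $F$) one checks that $\mathcal F$ lies in a Euclidean ball about $o$ of radius $\mathrm{poly}(\ell)$ and has $|\mathcal F|\le K\,\mathrm{poly}(\ell)$. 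Since $\gamma$ is a closed self-avoiding walk in $G^{*}$ anchored at one of these $\le|\mathcal F|$ faces and $G^{*}$ has degree at most $F$, we get $N_\ell\le|\mathcal F|\cdot F^{\ell}=\mathrm{poly}(\ell)\,F^{\ell}$, which is the desired estimate (the polynomial prefactor is harmless in the geometric series). Choosing $1-p<1/F$ close to $1$ then completes the proof.

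The step I expect to be the main obstacle is exactly this counting step: squeezing from the polynomial-growth bound~(1) and the isoperimetric bound~(2) a genuinely \emph{exponential} — not merely super-exponential — bound on the number of length-$\ell$ separating contours, while simultaneously controlling the local structure of $G^{*}$ (the sizes of the faces of $G$), since a super-exponential contour count would make the Peierls sum diverge for every $p<1$. Everything else — local finiteness, the duality correspondence between finite clusters and separating dual contours, and the final geometric-series estimate — is routine.
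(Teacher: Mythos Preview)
The paper does not actually prove this theorem: it is quoted verbatim as a result of Kozma and used as a black box, with the surrounding lemmas only checking that the hexacarpet satisfies hypotheses~(1) and~(2). The single sentence of commentary the paper offers --- that Kozma's argument is ``based on relating cut sets in the primal graph with open box crossings in the dual graph using a Pierl-type path counting argument'' --- does confirm that the overall shape of your proposal (finite cluster $\Rightarrow$ closed separating dual contour, then a Peierls sum over contours) matches the intended approach.

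That said, there is a real gap in your sketch at precisely the step you flag. Your contour count hinges on a uniform bound $F$ on the face sizes of $G$ (equivalently, on the vertex degrees of $G^{*}$), which you obtain from an assumed two-sided bound on edge lengths. Neither of these follows from the stated hypotheses: polynomial ball growth and the absence of accumulation points do not by themselves prevent arbitrarily large faces. (For instance, deleting from $\mathbb Z^{2}$ the vertices inside a sparse sequence of squares of slowly growing side length still yields polynomial growth and an isoperimetric inequality of the required form, yet produces unbounded faces.) So the self-avoiding-walk bound $N_\ell\le|\mathcal F|\cdot F^{\ell}$ is not available in general, and one must count minimal cut sets more carefully --- using~(2) to cap the enclosed volume and~(1) to confine the cut to a region of polynomially many edges --- without ever appealing to a bound on the dual degree. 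Your diagnosis that the exponential-versus-superexponential contour count is the crux is correct; the proposed mechanism for achieving it, however, requires an extra hypothesis that Kozma's argument does not.
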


\begin{figure}[t]
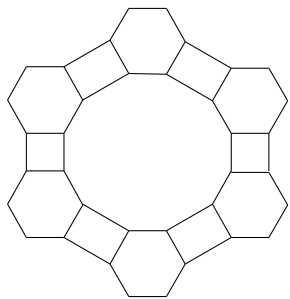

\begin{pgfpicture}{0cm}{-0.8cm}{4cm}{3.1cm}

\pgfxyline(0,0.443)(0.25,0)
\pgfxyline(0.25,0)(0.75,0)
\pgfxyline(0.75,0)(1,0.433)
\pgfxyline(1,0.433)(0.75,0.886)
\pgfxyline(0.75,0.886)(0.25,0.886)
\pgfxyline(0.25,0.886)(0,0.443)

\pgfxyline(0.25,0.886)(0.25,1.386)
\pgfxyline(0.75,0.886)(0.75,1.386)

\pgfxyline(0,1.829)(0.25,1.386)
\pgfxyline(0.25,1.386)(0.75,1.386)
\pgfxyline(0.75,1.386)(1,1.829)
\pgfxyline(1,1.829)(0.75,2.272)
\pgfxyline(0.75,2.272)(0.25,2.272)
\pgfxyline(0.25,2.272)(0,1.829)

\pgfxyline(0.75,0)(1.362,-0.353)
\pgfxyline(1,0.443)(1.612,0.09)

\pgfxyline(1.612,0.09)(1.362,-0.353)
\pgfxyline(1.362,-0.353)(1.612,-0.786)
\pgfxyline(1.612,-0.786)(2.112,-0.786)
\pgfxyline(2.112,-0.786)(2.362,-0.353)
\pgfxyline(2.362,-0.353)(2.112,0.09)
\pgfxyline(2.112,0.09)(1.612,0.09)

\pgfxyline(2.362,-0.353)(2.974,0)
\pgfxyline(2.112,0.09)(2.724,0.443)

\pgfxyline(2.974,0)(2.724,0.443)
\pgfxyline(2.974,0)(3.474,0)
\pgfxyline(3.474,0)(3.724,0.433)
\pgfxyline(3.724,0.433)(3.474,0.866)
\pgfxyline(3.474,0.866)(2.974,0.866)
\pgfxyline(2.974,0.886)(2.724,0.443)

\pgfxyline(2.974,0.886)(2.974,1.386)
\pgfxyline(3.474,0.886)(3.474,1.386)

\pgfxyline(2.974,1.386)(3.474,1.386)
\pgfxyline(2.974,1.386)(2.724,1.819)
\pgfxyline(2.724,1.819)(2.974,2.252)
\pgfxyline(2.974,2.252)(3.474,2.252)
\pgfxyline(3.474,2.252)(3.724,1.819)
\pgfxyline(3.724,1.819)(3.474,1.386)

\pgfxyline(2.724,1.819)(2.112,2.172)
\pgfxyline(2.974,2.252)(2.362,2.605)
\pgfxyline(1,1.829)(1.612,2.182)
\pgfxyline(0.75,2.272)(1.362,2.625)

\pgfxyline(1.362,2.625)(1.612,2.182)
\pgfxyline(1.612,2.182)(2.112,2.172)
\pgfxyline(2.112,2.172)(2.362,2.605)
\pgfxyline(2.362,2.605)(2.112,3.048)
\pgfxyline(1.362,2.625)(1.612,3.048)
\pgfxyline(1.612,3.048)(2.112,3.048)

\end{pgfpicture}
\caption{\label{Hexagon} First Graph Approximation of the Hexacarpet}
\end{figure}

\begin{lemma}
The hexacarpet, $H$, satisfies Condition 1 of Theorem \ref{thm:kozma} and is a planar graph with no vertex accumulation points.
\end{lemma}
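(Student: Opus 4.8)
The plan is to verify the two assertions separately, both by exploiting the concrete embedding of $H_n$ described after Definition \ref{def:hexa} (six copies of $H_{n-1}$ glued together, with edge lengths bounded below by $1$ after the appropriate scaling). First I would check the \emph{planarity and no vertex accumulation points}: by construction each $H_n$ is the planar pre-dual of the planar simplicial complex $T_n$, hence planar, and the limit $H$ is the increasing union of the $H_n$ in the fixed embedding in $\mathbb{R}^2$ with all edges of length $\ge 1$. Since any bounded region of $\mathbb{R}^2$ can contain only finitely many points that are pairwise at distance $\ge 1$ from their graph-neighbors — more precisely, the faces of $T_n$ have area $6^{-n}$ of the area of $T_0$ while $H$ fills out an expanding region under the rescaling — the vertices of $H$ form a locally finite set, so there are no accumulation points. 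I would state this cleanly by noting the embedding realizes $H$ as a subset of $\mathbb{R}^2$ in which the open unit balls around distinct vertices are ``almost disjoint'' in the sense needed for local finiteness.

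Next, for \emph{Condition 1} I need constants $K, D$ with $|B(v,r)| \le K r^D$ for every vertex $v$ and every $r \ge 1$. The natural choice is $D = 2$: because each vertex of $H$ has degree at most $6$ (each vertex of the hexacarpet corresponds to a face of $T_n$, which is a triangle, bounded by at most $3$ edges of $T_n$, dualizing to at most $3$ — and the gluing of six copies contributes the factor but the degree stays uniformly bounded), and because the edges all have length $\ge 1$ in the embedding, a Euclidean ball $B(v,r)$ can meet only $O(r^2)$ vertices: the unit half-balls around those vertices are essentially disjoint and all lie within $B(v, r+1)$, whose area is $\pi(r+1)^2 \le 4\pi r^2$ for $r \ge 1$. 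Packing gives $|B(v,r)| \le K r^2$ with an explicit $K$ (e.g. $K = 16\pi$ is safely enough once one accounts for the bounded-degree and minimal-edge-length facts). So $D = 2$ and a concrete $K$ work.

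The main obstacle I expect is \emph{getting the uniform bounded-degree and uniform minimal-edge-length claims precise}, since these are the linchpins of the packing argument and the paper has so far only asserted them pictorially (Figures \ref{dual1} and \ref{Hexagon}) for $H_1$. I would handle the degree bound by tracking what a vertex of $H_n$ is: a face of $T_n$; barycentric subdivision produces only triangular faces, and in the pre-dual each such triangle becomes a vertex adjacent to the (at most three) neighboring triangles, so $\deg \le 6$ after accounting for the boundary identifications in the six-fold gluing — I would argue this inductively from the $H_1$ picture, since $H_{n}$ is six copies of $H_{n-1}$ and the gluing only identifies boundary vertices in a controlled, degree-preserving-up-to-a-constant way. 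For the minimal edge length I would invoke the explicit construction: $H_1$ embeds with edges of length $\ge 1$, and the self-similar six-fold assembly of $H_n$ from copies of $H_{n-1}$, rescaled so that the whole picture grows rather than shrinks, preserves the lower bound $1$ on every edge length uniformly in $n$; passing to the increasing union gives the claim for $H$. Once these two uniform geometric facts are in hand, Condition 1 and local finiteness both follow from elementary area/packing estimates. Note that Condition 2 of Theorem \ref{thm:kozma} — the isoperimetric inequality $|\partial J| \ge k|J|^\epsilon$ — is \emph{not} part of this lemma's statement and would be treated separately (presumably in a following lemma), so I would not address it here.
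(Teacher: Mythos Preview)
Your approach is essentially the same as the paper's: embed $H$ as the increasing union of the $H_n$ with all edge lengths $\ge 1$, conclude there are no vertex accumulation points, and then use a disk-packing/area argument to get $|B(v,r)|\le K r^2$, i.e.\ $D=2$.

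The one notable difference is that you treat the uniform bounded-degree claim as a ``linchpin'' and spend effort sketching an inductive proof of it. The paper does not use bounded degree at all: once the specific embedding has a uniform positive lower bound on the distance between \emph{any} two vertices (not just adjacent ones), one places disjoint open $\epsilon$-balls at the vertices and the packing bound follows immediately. Note that minimal edge length $\ge 1$ together with bounded degree does not by itself force non-adjacent vertices to be far apart in an arbitrary planar graph, so your stated mechanism is not quite the right one; what actually does the work here is the concrete geometry of the hexagonal embedding in Figure~\ref{Hexagon} and its self-similar six-fold assembly, which keeps all vertices uniformly separated. (Incidentally, the vertex degree in $H$ is $3$, not $6$: each vertex is a triangular face of $T_n$ with three neighbors.) So your proof would go through, but the bounded-degree discussion can be dropped and replaced by the direct observation that the embedding gives a uniform inter-vertex spacing.
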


\begin{proof}
Let $H$ be the infinite graph limit of $H_n$. Since the $H_n$ are planar and $H_{n+1}$ consists of six copies of $H_n$ arranged in a ring and joined together, $H$ can be realized as an increasing union of $H_n$. All edges can be taken to have length at least one. So $H$ can be embedded as a planar graph with no vertex accumulation points. Take open balls of fixed positive radius, $\epsilon$ centered at each vertex. For $\epsilon$ small enough these balls are disjoint. Then a large ball $B(x,r) \subset \mathbb{R}^{2}$ can only contain quadratically many of these smaller balls. An upper bound on $K$ is given by the optimal packing density of disks in the plane whose exact value is not necessary, merely its existence. 
\end{proof}

\begin{lemma}
The hexacarpet, $H$, satisfies Condition 2 of Theorem \ref{thm:kozma}.
\end{lemma}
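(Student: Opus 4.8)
The plan is to establish an isoperimetric inequality of the form $|\partial J| \ge k|J|^{\epsilon}$ for all finite nonempty vertex sets $J \subset H$, which is exactly Condition 2 of Theorem \ref{thm:kozma}. The natural value to aim for is $\epsilon = 1/2$, since $H$ is a two-dimensional fractal-type graph and such graphs typically satisfy a ``square-root'' isoperimetric bound analogous to the one in $\mathbb{Z}^2$. The key structural fact I would exploit is that $H$ is the planar pre-dual of the iterated barycentric subdivision $T$, so faces of $H$ correspond to vertices of $T$ and each face of $H$ is a hexagon (away from a few exceptional faces coming from the original vertices $v_0,v_1,v_2$). In particular $H$ has uniformly bounded face size and, by Remark \ref{rem:barysides}, the areas of the triangular faces of $T_n$ scale like $6^{-n}$ while the mesh shrinks like $(2/3)^n$, which gives quantitative control on how the combinatorial structure sits inside $\mathbb{R}^2$.

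First I would fix the embedding of $H$ in the plane described in the preceding lemma, with all edges of length at least one. Given a finite vertex set $J$, I would pass to the union $U$ of the (closed) faces of $H$ incident to vertices of $J$; this is a compact region of the plane whose area is comparable to $|J|$ (bounded above and below by constants times $|J|$, using bounded face size, bounded degree, and the uniform lower bound on edge lengths). Then $\partial J$ — the set of edges with one endpoint in $J$ and one endpoint outside — controls the length of the topological boundary $\partial U$ up to a bounded factor, again because faces have bounded size and edges have length at least one. Applying the classical planar isoperimetric inequality to $U$, $\mathrm{length}(\partial U) \ge c\sqrt{\mathrm{area}(U)}$, and chaining the comparisons yields $|\partial J| \ge k|J|^{1/2}$ for a universal constant $k$. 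An alternative, more combinatorial route would be to use the self-similar ``six copies in a ring'' structure directly: a set $J$ confined to one of the six sub-hexacarpets of some level $H_{n+1}$ has its boundary estimated by induction, and a set spreading across several sub-copies picks up boundary from the junctions between them; this recursion should also terminate in an exponent governed by the ratio $\log 6 / \log(\text{linear scaling})$, which one checks equals $1/2$.

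I expect the main obstacle to be making the comparison ``$|\partial J| \asymp \mathrm{length}(\partial U)$'' fully rigorous, because $U$ need not be simply connected and its boundary may have several components, and because a single edge of $\partial J$ can be shared by faces that are counted in subtle ways near the three exceptional (non-hexagonal) faces descended from $v_0, v_1, v_2$. Handling those finitely many exceptional faces is a routine but fiddly case analysis: since there are only three of them at each level and their degrees stay bounded, they change the constants but not the exponent. A secondary technical point is verifying that the area of $U$ really is bounded below by a constant times $|J|$ and not merely by $|J|/\log|J|$ or worse — this needs the uniform lower bound on edge lengths together with the fact that each vertex of $H$ lies on at least one face of definite area, which follows from bounded degree and the explicit geometry of the barycentric subdivision. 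Once these comparisons are in hand, the conclusion $p_H < 1$ follows immediately by invoking Theorem \ref{thm:kozma}, since Condition 1 was verified in the previous lemma.
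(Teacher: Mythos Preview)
Your approach has a genuine gap: the claim that $H$ has uniformly bounded face size is false, and consequently the target exponent $\epsilon = 1/2$ is unreachable. The bounded faces of $H$ correspond under planar duality to the vertices of $T$, and \emph{every} vertex of $T$ has unbounded degree, not merely the three corners $v_0,v_1,v_2$: each barycentric subdivision doubles the degree of every pre-existing vertex. In the edge-length $\ge 1$ embedding this means the ``holes'' of $H$ created at level $n$ are faces with on the order of $2^n$ boundary edges. A concrete counterexample to $\epsilon=1/2$ is $J=V(H_n)$: in the dual picture $J$ is the full collection of $6^n$ triangular faces of $T_n$, while $\partial J$ corresponds to the edges of $T_n$ lying on $\partial T_0$, of which there are $3\cdot 2^n$. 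Hence $|\partial J|/|J|^{1/2}\asymp (2/\sqrt{6})^n\to 0$, forcing any valid exponent to satisfy $\epsilon\le \log 2/\log 6\approx 0.387$. Your alternative recursive suggestion inherits the same error when you assert the scaling ratio yields $1/2$; the boundary of $H_n$ grows like $2^n$, not $\sqrt{6}^{\,n}$.

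The paper repairs exactly this by changing embeddings. Instead of taking unions of faces of $H$ in the unit-edge picture, it uses the pre-dual embedding in which each vertex of $J\subset H_n$ \emph{is} a triangular face of $T_n$. The union $\mathcal{H}$ of these triangles has area exactly $\tfrac{\sqrt{3}}{4}\,6^{-n}|J|$ by Remark~\ref{rem:barysides}, and---this is the key point your comparison $|\partial J|\asymp\text{perimeter}(U)$ could not deliver---the edges of $\partial J$ correspond \emph{bijectively} to the boundary edges of $\mathcal{H}$, each of length at most the mesh $(2/3)^n$. The planar isoperimetric inequality then gives
\[
\left(\tfrac{2}{3}\right)^{n}|\partial J|\ \ge\ \text{perimeter}(\mathcal{H})\ \ge\ 2\sqrt{\pi}\sqrt{\tfrac{\sqrt{3}}{4}\,6^{-n}|J|},
\]
so $|\partial J|\ge C|J|^{1/2}\bigl(\sqrt{3}/(2\sqrt{2})\bigr)^{n}$; since $|J|\asymp 6^{n}$ this becomes $|\partial J|\ge C'|J|^{1/2+\alpha}$ with $\alpha=\log_6\!\bigl(\sqrt{3}/(2\sqrt{2})\bigr)$, a positive exponent strictly below $1/2$. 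The idea you were missing is that the correct planar region to attach to $J$ is built from the dual triangles in $T_n$, not from the faces of $H$; this makes the perimeter--to--$|\partial J|$ comparison exact, at the price of a level-dependent \emph{upper} bound on edge length, which is precisely why the final exponent must drop below $1/2$.
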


\begin{proof}
Let $J$ be a finite subset of vertices in $H$ and $\partial J$ the set of edges with one end in $J$ and the other not. 
The aim is to find $k$ and $\epsilon$ positive so that there is a positive lower bound on $\frac{|\partial J|}{k|J|^{\epsilon}}$ over all finite vertex sets, $J$.

 Suppose that $J$ is a connected subset of $H$ with cardinality between $6^{n-2}$ and $6^{n-1}$. Without loss of generality we can assume that $J$ is contained in some copy of $H_n$ as a subset of $H$. 

A set of vertices in $H_n$ corresponds to a collection of faces in $B^{n}(T_0)$. This temporarily changes the embedding of $H_n$ from one where all edges are length at least one described above to the embedding inherited from the definition of $H_n$ as a pre-dual graph of a triangulation of an equilateral triangle. This change of embedding does not upset the conclusions of the previous lemma since the conclusion of this lemma is statement about the graph structure itself and only uses a particular embedding to access the structure that is already present.  Let $\mathcal{H}$ be the collection of faces corresponding to $J$. It has total area $\frac{\sqrt{3}}{4}6^{-n}|J|$. Notice that $\mathcal{H}$ is a connected planar region and so cannot have a perimeter less than the circumference of a circle with the same area. Thus
\begin{equation*}
	perimeter(\mathcal{H}) \ge 2\sqrt{\pi}{\sqrt{\frac{\sqrt{3}}{4} 6^{-n}|J|}}.
\end{equation*}
Since $\mathcal{H}$ is a polygonal region whose edges have length less than $(\frac{2}{3})^{n}$ we can put a lower bound on the number of edges in the boundary of $\mathcal{H}$ which is $|\partial J|$,
\begin{eqnarray*}
	\left(\frac{2}{3}\right)^{n}|\partial J| & \ge & perimeter(\mathcal{H})\\
	|\partial J| &\ge& 2\sqrt{\pi}{\sqrt{\frac{\sqrt{3}}{4} 6^{-n}|J|}}\left( \frac{3}{2} \right)^{n}\\
	&=& C|J|^{1/2}\left( \frac{\sqrt{3}}{2\sqrt{2}} \right)^{n}\\
	&\ge & C'|J|^{1/2 + \alpha} \sim C'|J|^{0.7737}
\end{eqnarray*}
where 
$$\alpha  = \log_6\left(\frac{\sqrt{3}}{2\sqrt{2}}\right) \hspace{1cm} \text{and} \hspace{1cm} C' = \frac{3\sqrt[4]{3}}{8}\sqrt{\pi}.$$ 
It is crucial in obtaining these estimates that we could control $|J|$ by the area of $\mathcal{H}$. Then for Theorem \ref{thm:kozma} we choose $k = C'$ and $\epsilon = \frac{1}{2} + \alpha$.

If $J$ is not connected then its components can be connected by adding paths to $J$ connecting each component. Such paths can add no more edges to the boundary than vertices to $J$. Since $\epsilon < 1$, connecting $J$ increases $\frac{|\partial J|}{k|J|^{\epsilon}}$ so the estimate in Theorem \ref{thm:kozma}(2) holds for all finite sets $J$.
\end{proof}

\begin{corollary}\label{cor:pHup}
Following the above two lemmas and Theorem \ref{thm:kozma} $p_H < 1$.
\end{corollary}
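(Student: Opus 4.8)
The plan is to assemble the two preceding lemmas and feed the result directly into Kozma's theorem (Theorem \ref{thm:kozma}) with $G = H$. First I would observe that the first lemma has already shown that $H$ can be embedded in $\mathbb{R}^2$ as a planar graph with no vertex accumulation points and that it satisfies Condition 1 of Theorem \ref{thm:kozma}, with $D = 2$ and $K$ any constant exceeding the optimal disk-packing density of the plane (the relevant disks being the disjoint balls of fixed radius $\epsilon$ around the vertices). Second, the second lemma establishes Condition 2, producing explicit constants $k = C'$ and $\epsilon = \tfrac12 + \alpha$ with $\alpha = \log_6\!\big(\tfrac{\sqrt3}{2\sqrt2}\big) > 0$, so that $|\partial J| \ge k|J|^{\epsilon}$ for every finite nonempty vertex set $J$. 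Since $0 < \epsilon < 1$ as required, both hypotheses of Kozma's theorem hold, and the conclusion $p_H < 1$ is immediate.

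The one point I would take care to articulate, rather than leave implicit, is that the two lemmas are proved in two different planar realizations of the graphs $H_n$: the "all edges of length at least one" realization used for Condition 1, and the pre-dual realization on the fixed equilateral triangle used to run the isoperimetric estimate in Condition 2. Kozma's theorem, however, is a statement about a single fixed embedding. I would reconcile this by noting that Condition 2 is purely combinatorial once stated — it compares $|\partial J|$ with $|J|$ and references no metric — so the bound $|\partial J| \ge C'|J|^{1/2+\alpha}$ transfers verbatim to the length-$\ge 1$ embedding; the pre-dual picture is used only as a bookkeeping device to identify vertex sets of $H_n$ with unions of faces of $B^n(T_0)$ and thereby control $|J|$ by area. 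Thus both conditions hold simultaneously in the length-$\ge 1$ embedding, which is the embedding to which Kozma's theorem is applied.

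I do not anticipate a genuine obstacle in this corollary: all of the analytic work — the packing argument for Condition 1 and, more substantially, the area-to-perimeter comparison together with the mesh bound $(\tfrac23)^n$ from Remark \ref{rem:barysides} for Condition 2 — has been carried out in the two lemmas, and what remains is the formal invocation of the black-box Theorem \ref{thm:kozma}. If anything, the only place one must be slightly attentive is checking that the exponent $\epsilon = \tfrac12 + \alpha$ is strictly less than $1$, i.e. that $\alpha < \tfrac12$, which is clear since $\tfrac{\sqrt3}{2\sqrt2} < 1$ forces $\alpha < 0 < \tfrac12$; in fact $\alpha \approx 0.2737$, so $\epsilon \approx 0.7737$, comfortably inside $(0,1)$. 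With that observed, Corollary \ref{cor:pHup} follows.
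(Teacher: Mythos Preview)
Your proposal is correct and matches the paper's approach exactly: the paper gives no separate proof of this corollary, simply stating that it follows from the two lemmas and Kozma's theorem, and your write-up just spells this out (including the useful observation, already made inside the second lemma, that Condition~2 is purely combinatorial and hence independent of the embedding). One minor slip to clean up: you assert both $\alpha>0$ and $\alpha<0$ in the same paragraph; since $\tfrac{\sqrt{3}}{2\sqrt{2}}<1$ one has $\alpha=\log_6\!\big(\tfrac{\sqrt{3}}{2\sqrt{2}}\big)\approx -0.274$, so $\epsilon=\tfrac12+\alpha\approx 0.226$ (the paper's own figure $0.7737$ appears to be a typo), but this is harmless for the corollary because all Kozma's theorem needs is $\epsilon>0$.
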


\section{Hexacarpet and non-p.c.f. Sierpinski Gasket Duality}\label{sec:dual}
We now turn our attention to the relationship between percolation on the hexacarpet and the barycentric subdivisions of a triangle. We remarked earlier that the hexacarpet is the dual of the infinitely repeated barycentric subdivision of a triangle. That is, the dual graph of $T$ is $H$ with the vertex at infinity and edges incident to it deleted. 

\begin{definition}\label{dualdef} Let $G$ be a planar graph drawn in the plane. The (planar) dual of $G$, called $G_d$ is constructed by placing in each face of $G$ (including the infinite face if it exists) a vertex; for each edge $e$ of $G$, we place a corresponding edge joining the two vertices of $G_d$ which lie in the two faces of $G$ abutting the edge $e$.
\end{definition}

It remains to show how bond percolation on $T$ relates to bond percolation on $H$. We do so through the use of the following theorem from Bollob\'as and Riordan \cite{dualpc}. We will first need the following definition.

\begin{definition} A lattice $G$ has $k$-fold symmetry if the rotation about the origin through an angle of $2\pi/k$ maps the plane graph $G$ to itself and there is a group of translations that also map $G$ to itself.
\end{definition}

\begin{theorem}[Bollob\'as and Riordan \cite{dualpc}]\label{Bollobas} 
Let $G$ be a planar lattice with $k$-fold symmetry, $k\ge 2$, and let $G_D$ be its dual. Then $p_G+p_{G_D}=1$ for bond percolation.
\end{theorem}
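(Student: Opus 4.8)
\emph{The plan} is to prove the two inequalities $p_G + p_{G_D} \ge 1$ and $p_G + p_{G_D} \le 1$ separately, both resting on the fundamental planar duality between open edges of $G$ and closed edges of $G_D$. First I would set up the coupling used throughout Section \ref{sec:perc}: put i.i.d.\ uniform labels $\omega(e)$ on the edges of $G$, declare $e$ open when $\omega(e) < p$, and declare the dual edge $e^{*}$ of $G_D$ open precisely when $e$ is closed. Then $G_D$ carries bond percolation at parameter $1-p$, and both processes live on a single probability space; the translation group supplied by the lattice hypothesis gives ergodicity and hence the $0$--$1$ law for the existence of infinite clusters. The topological input is the crossing-duality lemma: in any finite box $R$ cut from the lattice, there is an open horizontal crossing of $R$ in $G$ if and only if there is no open vertical crossing of the complementary region in $G_D$, so these two crossing events are complementary and their probabilities sum to exactly $1$.

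For the lower bound $p_G + p_{G_D} \ge 1$ I would argue by contradiction. If $p_G + p_{G_D} < 1$, choose $p$ with $p_G < p < 1 - p_{G_D}$. Then $p > p_G$ forces $\theta_G(p) > 0$, so $G_p$ has an infinite open cluster almost surely, while $1 - p > p_{G_D}$ forces an infinite open cluster in $G_D$ at parameter $1-p$. Planarity makes these incompatible: a doubly-infinite open dual path would have to cross the infinite primal cluster, contradicting that every open primal edge corresponds to a closed dual edge. Invoking Burton--Keane uniqueness to rule out the degenerate configurations completes the contradiction.

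For the upper bound $p_G + p_{G_D} \le 1$ I would again argue by contradiction: if $p_G + p_{G_D} > 1$, choose $p$ with $1 - p_{G_D} < p < p_G$, so that both $G$ at parameter $p$ and $G_D$ at parameter $1-p$ are subcritical. Here the $k$-fold symmetry is essential to turn box-crossing probabilities into percolation. Rotation by $2\pi/k$ together with the translation group lets me build regions on which horizontal primal crossings and vertical dual crossings are symmetric, so the crossing-duality identity above pins the crossing probability of such a symmetric region to a fixed value bounded away from $0$ and $1$. A Russo--Seymour--Welsh--type bootstrap then propagates this single bounded-below crossing probability to crossings of arbitrarily long rectangles, and gluing these across scales produces an infinite open cluster in whichever of $G$ or $G_D$ is favored, contradicting that both are subcritical. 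Combining the two inequalities yields $p_G + p_{G_D} = 1$.

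The hard part is the upper bound, and within it the box-crossing estimate: converting the single symmetric-region crossing identity into uniform lower bounds on crossings of long rectangles is exactly where the $k$-fold symmetry and an RSW or sharp-threshold argument must be deployed carefully, since without such control the subcritical crossing probabilities need not decay and the contradiction collapses. The lower bound, by contrast, is the soft topological fact that two transverse infinite clusters cannot coexist in the plane.
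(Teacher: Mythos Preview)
The paper does not prove this statement at all: Theorem~\ref{Bollobas} is quoted from Bollob\'as and Riordan \cite{dualpc} and used as a black box in the proof of Theorem~\ref{duality}. There is therefore no ``paper's own proof'' to compare against.

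That said, your outline is a fair summary of how the Bollob\'as--Riordan result is actually established in \cite{dualpc}. The easy direction $p_G + p_{G_D} \ge 1$ is indeed a Zhang-type coexistence argument (uniqueness of the infinite cluster plus a topological obstruction to simultaneous infinite primal and dual clusters), and the hard direction $p_G + p_{G_D} \le 1$ is an RSW/sharp-threshold argument exploiting the rotational symmetry. Two cautions, though. First, your lower-bound sketch compresses the genuine content: merely observing that an infinite dual path ``would have to cross'' the infinite primal cluster is not a contradiction by itself, since the dual path could in principle weave around it; the actual argument uses uniqueness to force all arms of the infinite primal cluster to land on a single side of a large box with positive probability, and it is that configuration that is topologically impossible. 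Second, your upper-bound sketch correctly names RSW as the engine but understates the work: Bollob\'as and Riordan's contribution was precisely to carry out an RSW-style bootstrap for lattices with only $k$-fold symmetry rather than full reflection symmetry, and that step is far from routine. As a proof plan your proposal is on the right track, but turning it into a proof would essentially mean reproducing \cite{dualpc}.
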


There is a complication in applying this theorem in that $H$ and $T$ are not lattices. They are infinite graphs which can be used to tile the plane using only finitely many copies arranged with radial symmetry around the origin. Thus we have to consider approximating graphs which can be used to tile the plane to form a lattice.

\begin{theorem}\label{duality} 
Let $T$ and $H$ be as before. Then $p_T+p_H=1$.
\end{theorem}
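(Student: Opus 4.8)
The plan is to reduce the statement about the (non-lattice) infinite graphs $T$ and $H$ to Theorem~\ref{Bollobas} applied to genuine periodic lattices built out of finitely many copies of $T$ and $H$. First I would fix a positive integer, say $6$ (or whatever the radial symmetry of the construction dictates), and form the lattice $L_T$ obtained by taking that many rotated copies of $T$ arranged around the origin and then tiling the plane by translates of the resulting patch; one must check that this is indeed a planar lattice with $k$-fold symmetry for some $k\ge 2$ and a translation subgroup, which is clear from the self-similar, radially symmetric way $T_n$ is assembled from six copies of $T_{n-1}$. The key point is then that $L_T$ is built from $T$'s by identifying boundary vertices, and $T$ itself is (isomorphic to) a subgraph of $L_T$ while $L_T$ is in turn obtained from a disjoint union of copies of $T$ by a vertex-quotient identifying only finitely many vertices in each class along the seams; so by Remark~\ref{rem:mono1} and Lemma~\ref{lem:mono2} we get $p_{L_T}\le p_T$ and, by a matching argument in the other direction (a crossing of $T$ forces a crossing inside one fundamental copy of $L_T$, and conversely an infinite cluster in $L_T$ projects into one copy), $p_{L_T}=p_T$. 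The same argument gives $p_{L_H}=p_H$ for the analogous hexacarpet lattice.

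Next I would verify that $L_H$ is (up to the deletion of the vertex at infinity and its incident edges, which does not affect the critical probability since removing finitely many edges changes no asymptotic crossing event) the planar dual of $L_T$: this is exactly the observation already recorded before Definition~\ref{dualdef} that $H$ is the dual of $T$, promoted from the finite approximations $H_n = $ pre-dual of $T_n$ to the limit, and then checked to be compatible with the tiling, so that $(L_T)_D = L_H$ as plane graphs. With $L_T$ a planar lattice with $k$-fold symmetry and $L_H$ its dual, Theorem~\ref{Bollobas} yields $p_{L_T}+p_{L_H}=1$, and combining with $p_{L_T}=p_T$ and $p_{L_H}=p_H$ gives $p_T+p_H=1$.

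The main obstacle I expect is making the passage from the single infinite graph to the lattice fully rigorous in both directions: the inequality $p_{L_T}\le p_T$ is immediate, but $p_T\le p_{L_T}$ (equivalently $p_{L_T}\ge p_T$) requires arguing that an infinite open cluster in the lattice $L_T$ — which a priori could wander through infinitely many fundamental tiles — forces, via the crossing-probability reformulation used throughout Section~\ref{sec:BSSG}, a crossing of a single tile, i.e. of a single copy of $T$; one has to use that each tile is a compact piece, that crossings of adjacent tiles concatenate, and that by self-similarity a crossing of $T$ at level $n$ is the same event as a crossing of any cell. Care is also needed that the identifications along tile boundaries genuinely satisfy the uniformly-bounded-fibre hypothesis of Lemma~\ref{lem:mono2}, and that deleting the vertex at infinity from the dual is harmless; once those bookkeeping points are settled the conclusion drops out of Theorem~\ref{Bollobas}.
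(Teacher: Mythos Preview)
Your strategy has a genuine gap that the paper's proof is specifically designed to avoid. You want to apply Theorem~\ref{Bollobas} directly to a lattice $L_T$ built from copies of the limit graph $T$, but $T$ (and hence $L_T$) has vertices of infinite degree: every vertex that is ever a barycenter becomes a vertex of unbounded degree in the limit. The Bollob\'as--Riordan duality result is stated and proved for locally finite planar lattices, so it does not apply to $L_T$. This is not a technicality one can wave away; it is precisely why the paper treats percolation on $T$ via the crossing-probability limit rather than via an infinite cluster in a single infinite graph (see the opening paragraph of Section~\ref{sec:BSSG}). For the same reason, your appeal to Remark~\ref{rem:mono1} and Lemma~\ref{lem:mono2} to get $p_{L_T}=p_T$ is not well posed: $p_T$ is not defined as the threshold for an infinite open cluster in $T$, so the monotonicity lemmas, which are about $\theta_G(p)=\mathbb{P}(|\mathcal{O}_{G_p}|=\infty)$, do not compare the right quantities.

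The paper's route avoids this by never forming a lattice out of the limit object. Instead it builds, for each finite level $n$, genuine locally finite planar lattices $T'_n$ and $H'_n$ by tiling the plane with copies of $T_n$ and $H_n$; these are honest $6$-fold symmetric lattices, so Theorem~\ref{Bollobas} gives $p_{T'_n}+p_{H'_n}=1$ for every $n$. The remaining work is to show $p_{H'_n}\to p_H$ (using that $H$ has bounded degree and that large neighborhoods of the origin in $H'_n$ and $H'$ eventually coincide) and $p_{T'_n}\to p_T$ (using the crossing-probability definition of $p_T$ and that $T'_n$ is tiled by copies of $T_n$). Your intuition about comparing a single tile to the tiled plane is salvageable on the $H$ side, but on the $T$ side you must work level by level; the limit-first approach cannot be made to fit Theorem~\ref{Bollobas}.
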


\begin{proof}
Let $T'$ be the union of six unbounded copies of $T$ arranged with $C_6$ radial symmetry about the origin. Let $T'_n$ be the triangular tiling of the plane with the graphs $T_n$. Then $H'_n$ is a triangular tiling of the plane with graphs $H_n$ and also the planar dual of $T'_n$. Both $T'_n$ and $H'_n$ are lattices with $6-$fold symmetry. By Theorem \ref{Bollobas} $p_{T'_n} + p_{H'_n} = 1$ for all $n \ge 1$. It remains to be shown that $p_{T'_n} \rightarrow p_T$ and $p_{H'_n} \rightarrow p_H$. 

As $n \rightarrow \infty$, $T'_n \rightarrow T'$ and $H'_n$ approaches a similar arrangement of six copies of $H$ placed radially about the origin. Note that in this embedding $H$ has vertex accumulation points as a subset of $\mathbb{R}^{2}$ but not in the graph distance metric. 

Fix $\epsilon > 0$ and consider the event $\{ |\mathcal{O}_{H'}| > m\}.$ The origin here is to be taken as any vertex on the face of $H'$ containing the origin of $\mathbb{R}^{2}$. Since in the graph metric a local patch of $H$ is isometric to a patch in $H_n$ for large enough $n$ there exists an $N = N(m) > 0$ so that 
\begin{equation*}
	\mathbb{P}( |\mathcal{O}_{H'}| > m)  = \mathbb{P}_n ( |\mathcal{O}_{H_n'}| > m).
\end{equation*}
Where $\mathbb{P}_n$ is the product probability measure on $[0,1]^{E_{H_n'}}$. Equality can hold despite the different measures since the neighborhood of the origin for large enough $n$ in $H_{n}'$ and in $H'$ are isometric and the marginals of $\mathbb{P}_n$ and $\mathbb{P}$ coincide if the event only concerns those edges. Now choose $M =M(\epsilon)$ large enough so that for $m \ge M$
\begin{equation*}
	\mathbb{P}( |\mathcal{O}_{H'}| > m) \in [\theta_{H'}(p),\theta_{H'}(p)+\epsilon].
\end{equation*}
This means that for all $n \ge N(M(\epsilon))$ and $m \ge M(\epsilon)$, 
\begin{equation*}
	\mathbb{P}_n( |\mathcal{O}_{H_n'}| > m) \in [\theta_{H'}(p),\theta_{H'}(p)+\epsilon].
\end{equation*}
So the limits over $m$ and $n$ can be taken in either order. Taking the limit over $m$ first we get $\lim_{n \rightarrow} p_{H_n'} = p_{H'}$.

For $T$ we must recall that every vertex in $T$ has infinite degree so that the $T_n$ do not share a common patch around the origin. But as with the diamond fractal, $p_T$ is the limit of $p_{T_n}$, the crossing probabilities for the finite approximating graphs to $T$. What remains is to show that $\lim_{n \rightarrow \infty} p_{T_n'} - p_{T_n} = 0$. Since $T'_n$ is a tiling of $T_n$ we can mark the origin as the center of some copy of $T_n$. Then $T'_n$ and $T_n$ share a common graph isometric neighborhood around the origin and the argument used for the hexacarpets applies again.Thus $p_{T'_n}$ and $p_{T_n}$ have a common limit, $p_T$.
\end{proof}

\bibliographystyle{plain}
\bibliography{Bib/bib}{}

\end{document}